\documentclass[reqno]{amsart}

\usepackage{upref,amsxtra,amscd,amsopn,amsbsy,amstext,amssymb,amsmath,amsthm}
\usepackage{bm}
\usepackage{amsfonts}
\usepackage{ulem}
\usepackage{mathrsfs}
\usepackage{braket}
\usepackage{color}
\usepackage[pdftex]{graphicx}

\newcommand{\Add}[1]{#1} 
\newcommand{\add}[1]{#1} 
\newcommand{\Erase}[1]{\if0{#1}\fi}

\makeatletter
\@addtoreset{equation}{section}

\makeatother

\newtheorem{theorem}{Theorem}[section]
\newtheorem{proposition}[theorem]{Proposition}

\newtheorem{lemma}[theorem]{Lemma}

\theoremstyle{definition}
\newtheorem{definition}[theorem]{Definition}
\newtheorem{remark}[theorem]{Remark}

\newcommand{\lm}{\lambda}
\newcommand{\R}{\mathbb{R}}
\newcommand{\Z}{\mathbb{Z}}

\newcommand{\N}{\mathbb{N}}
\newcommand{\vk}{\kappa}
\newcommand{\vp}{\varphi}

\newcommand{\va}{\alpha}
\newcommand{\vb}{\beta}
\newcommand{\vc}{\gamma}

\newcommand{\ve}{\varepsilon}

\newcommand{\vt}{\tau}

\newcommand{\pd}{\partial}


\newcommand{\W}{\mathcal{W}}
\newcommand{\LL}{\mathcal{L}}


\newcommand{\sn}{\mathrm{sn}}
\newcommand{\cn}{\mathrm{cn}}
\newcommand{\dn}{\mathrm{dn}}
\newcommand{\E}{\mathrm{E}}
\newcommand{\K}{\mathrm{K}}
\newcommand{\pinA}{\mathcal{A}_{l,L}}

\allowdisplaybreaks

\begin{document}

\title[Critical points of pinned elasticae]{
The critical points of the elastic energy among curves pinned at endpoints
}
\author[K.~Yoshizawa]{Kensuke Yoshizawa}
\email{kensuke.yoshizawa.s5@dc.tohoku.ac.jp}
\keywords{
Euler's elastica; boundary value problem; shooting method.
}
\subjclass[2010]{
49K15, 53A04, 34A05}

\date{\today}

\maketitle

\vskip-5mm
{\small
\hskip5mm\noindent
        Mathematical Institute,
        Tohoku University,  
        Aoba, Sendai  980-8578, Japan 
}

\begin{abstract}
In this paper we find curves minimizing the elastic  energy among curves whose length is fixed and whose ends are pinned.
Applying the shooting method, 
\Add{we can identify all critical points explicitly and determine which curve is the global minimizer.}
As a result we show that the critical points consist of wavelike elasticae and the
minimizers do not have 
\Add{any loops or interior inflection points.}
\end{abstract}

\section{Introduction} \label{section:1}
Let $\vc$ be a smooth planar curve. 
Then \textit{elastic  energy} for $\vc$ is given by
\[
\W(\vc):=\int_{\vc}\vk^2\,ds, 
\]
where $s$ denotes the arc length parameter and $\vk$ denotes the curvature.
The minimization problem for $\W$ is called \textit{Euler's elastica problem} and has been studied due to not only mathematical interest but also the importance of applications.
See e.g., \cite{Antman, lev, Love, Tru} for more details of the history and see e.g., \cite{CMM, CKS, DFLM, Mum} for \Erase{the}applications. 

For given constants $0<l<L$, we define 
\begin{align*}
\pinA := \Set{ \vc \in C^{\infty} \big([0,1];\R^2 \big) | 
\begin{array}{l}
\vc(0)=(0,0),  \ 
\vc(1)= (l,0), \\
\LL(\vc)=L, \ \ \ \ \ 
\vc'(t)\ne0 
\end{array}
},
\end{align*}
where $\LL(\vc)$ denotes the length of $\vc$.
Hereafter, we use both the original parameter $t\in[0,1]$ and the arc length \add{parameter} $s\in[0,L]$.
For a curve $\vc$\Erase{$\in \pinA$}, we denote its arc length reparameterization by $\tilde{\vc}$.
In this paper we are interested in
the minimization problem for $\W$ among curves belonging to $\pinA$.
According to \cite{AGP20}, critical points of  $\W$ in $\pinA$ are called \textit{pinned elasticae}
\Add{and} a way to \Erase{deduce}\Add{approximate} pinned elasticae by a numerical procedure is \Add{demonstrated}.
However, as mentioned in \cite{AGP20},
\begin{itemize}
\item[(A)] \textit{whether one can construct all critical points explicitly or not}
\end{itemize}
is \Add{an open problem.}
Although Linn\'{e}r \cite{Lin98} obtained some results \Erase{of}\Add{toward} (A), one needs to solve a complicated system so as to obtain curves explicitly.
Furthermore, to the best of the author's knowledge, the following problems are also open:
\begin{itemize}
\item[(B)] \Add{\textit{Are global minimizers of $\W$ in $\pinA$ unique?}}
\item[(C)] \textit{Which of the critical points is the global minimizer?}
\end{itemize}
The aim of this paper is to give \Erase{affirmative}answers to problems (A), (B) and (C).

Let $\K(p)$ and $\E(p)$ be the complete elliptic integrals of the first and second kind, respectively ($p\in(0,1)$ is the modulus) and let $\cn$ be the Jacobi elliptic function 
(see Section \ref{subsect:2.2} for definitions).
Theorem~\ref{thm:1.1} is concerned with problem (A):
\begin{theorem} \label{thm:1.1}
The set of critical points of $\W$ in $\pinA$ is
\[
\Set{
\vc \in \pinA | \tilde{\vc} \text{ is } \hat{\vc}_n^{+} \text{ or } \hat{\vc}_n^{-} \text{ or } \check{\vc}_n^{+} \text{ or }\check{\vc}_n^{-}
\quad  \text{for some}\ \  n\in\N\cup\{0\}
}.
\]
Here
\begin{itemize}
\item[(i)] $\hat{\vc}_n^{\pm}(s)=(\hat{X}_n(s), \pm\hat{Y}_n(s))$ for $s\in[0,L]$,
\begin{align*}
\hat{X}_n(s)&= 
2 \hat{p}^2 \int_0^s \cn\bigg( \frac{2(n+1)\K(\hat{p})}{L} t +\K(\hat{p}), \hat{p} \bigg)^2\,dt + \big(1-2\hat{p}^2 \big) s,   \\
\hat{Y}_n(s)&= -\frac{ \hat{p} L}{(n+1)\K(\hat{p})}
 \cn\bigg( \frac{2(n+1)\K(\hat{p})}{L} s+\K(\hat{p}), \hat{p} \bigg),
\end{align*}
where $\hat{p}$ is uniquely determined by the solution of $2\E(p)/\K(p) -1  = {l}/{L}$.
\vspace{0.5\baselineskip}
\item[(ii)] $\check{\vc}_n^{\pm}(s)=(\check{X}_n(s), \pm\check{Y}_n(s))$ for $s\in[0,L]$,
\begin{align*}
\check{X}_n(s)&= 
-2 \check{p}^2 \int_0^s \cn\bigg( \frac{2(n+1)\K(\check{p})}{L} t -\K(\check{p}), \check{p} \bigg)^2\,dt - \big(1-2\check{p}^2 \big) s,   \\
\check{Y}_n(s)&= \frac{ \check{p} L}{(n+1)\K(\check{p})}
 \cn\Big( \frac{2(n+1)\K(\check{p})}{L} s-\K(\check{p}), \check{p} \Big),
\end{align*}
where $\check{p}$ is uniquely determined by the solution of $-2\E(p)/\K(p) +1  = {l}/{L}$.
\end{itemize}
\end{theorem}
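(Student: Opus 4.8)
The plan is to realize the shooting method of the abstract: derive the Euler--Lagrange system, integrate it, reconstruct the curve, impose closure, and verify unique solvability. First I would compute the first variation of $\W(\vc) + \lm\LL(\vc)$, where $\lm$ is a Lagrange multiplier for the constraint $\LL(\vc) = L$. I expect the interior equation to be the classical elastica equation $2\vk'' + \vk^3 - \lm\vk = 0$, primes denoting arc-length derivatives. Since the endpoints are fixed in position while the tangent directions are free, the admissible variations rotate the endpoints freely, so the natural boundary condition is $\vk(0) = \vk(L) = 0$: every critical point begins and ends at an inflection point. This already rules out the degenerate competitors, since $\vk \equiv 0$ (a segment) is incompatible with $l < L$ and a nonzero constant $\vk$ (a circular arc) cannot vanish at the endpoints.

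Next I would integrate the elastica equation. Multiplying by $\vk'$ yields the first integral $(\vk')^2 = -\tfrac14\vk^4 + \tfrac{\lm}{2}\vk^2 + c$, whose bounded non-constant solutions are governed by the roots of the right-hand quartic; the requirement that $\vk$ vanish at both endpoints selects the \emph{wavelike} branch, on which the curvature is a rescaled Jacobi cosine $\vk(s) = A\,\cn(bs + \phi, p)$ with $A, b, p$ fixed by $\lm$ and $c$. Imposing $\vk(0) = 0$ pins the phase to a zero of $\cn$, namely $\phi = \pm\K(p)$, and imposing $\vk(L) = 0$ forces the argument to advance by a positive even multiple of $\K(p)$, i.e.\ $bL = 2(n+1)\K(p)$ for some $n \in \N\cup\{0\}$, so that $b = 2(n+1)\K(p)/L$. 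Here $n$ counts the interior zeros of $\vk$; the two curvature signs $\phi = \pm\K(p)$ and the reflection $Y \mapsto -Y$ across the displacement axis generate the four labeled families $\hat{\vc}_n^{\pm}$ and $\check{\vc}_n^{\pm}$.

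The computational heart is the reconstruction. From $\vk$ I would recover the tangent angle $\theta(s) = \int_0^s \vk\,d\sigma$ and then the curve $\vc(s) = \int_0^s(\cos\theta, \sin\theta)\,d\sigma$. Using standard antiderivative identities for Jacobi functions, I expect the normal component to be proportional to $\cn$ itself, giving the displayed $\hat{Y}_n$ (which then vanishes at $s = L$ precisely because $\vk(L) = 0$, so the $y$-closure is automatic), and the tangential component to be an affine combination of $s$ and $\int_0^s \cn^2$, giving the displayed $\hat{X}_n$. After normalizing by a rigid motion the only remaining scalar condition is $\hat{X}_n(L) = l$; evaluating it through $\int_0^{2\K}\cn^2(u,p)\,du = 2\big(\E(p) - (1-p^2)\K(p)\big)/p^2$ collapses the integral to $\hat{X}_n(L) = L\big(2\E(p)/\K(p) - 1\big)$, so closure is exactly $2\E(p)/\K(p) - 1 = l/L$. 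The companion family $\check{\vc}_n^{\pm}$ corresponds to the opposite force orientation, which reverses the sign of the tangential profile and hence replaces this by $-2\E(p)/\K(p) + 1 = l/L$.

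Finally I would prove unique solvability of the modulus equations. Since $\E(p), \K(p) \to \pi/2$ as $p \to 0$ while $\E(p) \to 1$ and $\K(p) \to \infty$ as $p \to 1$, the map $p \mapsto 2\E(p)/\K(p) - 1$ runs from $1$ to $-1$, and I would establish strict monotonicity from the derivative formulas for $\E$ and $\K$; as $l/L \in (0,1)$ this yields a unique $\hat{p}$, and symmetrically a unique $\check{p}$. The main obstacle I anticipate is twofold. The explicit integration producing the clean Cartesian formulas is delicate: one must track the phase, the curvature sign, and the reduction of every period to $\E/\K$, and then match them to the displayed expressions. Equally important is the \emph{completeness} of the classification, i.e.\ confirming that the shooting procedure captures every critical point and that the four families, indexed by $n$, are exhaustive and pairwise distinct, with no other admissible branch (such as an orbitlike elastica whose curvature never vanishes) consistent with $\vk(0) = \vk(L) = 0$.
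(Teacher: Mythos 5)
Your proposal is correct, and its skeleton is the same as the paper's: first variation with a multiplier giving $2\vk''+\vk^3-\lm\vk=0$ together with the natural conditions $\vk(0)=\vk(L)=0$, the wavelike curvature $\vk=A\cn(\va s+\phi,p)$ with $\phi=\pm\K(p)$ and $\va L=2(n+1)\K(p)$, the closure computation collapsing to $2\E(p)/\K(p)-1=\pm l/L$ via $\int_0^{\K(p)}\cn^2\,dx=\bigl(\E(p)-(1-p^2)\K(p)\bigr)/p^2$, and unique solvability of the modulus equations from strict monotonicity of $\E/\K$ (the paper's Lemma~\ref{lem:2.1}); your worry about orbitlike branches is resolved exactly as you suggest, since $\vk(0)=0$ is incompatible with a nonvanishing curvature. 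The one place you genuinely diverge is the reconstruction step. You integrate the tangent angle and invoke classical Jacobi antiderivative identities in the force frame, then identify that frame with the pinned frame by noting that the normal coordinate is proportional to $\vk$ and hence vanishes at both endpoints, so the force axis is the chord. The paper never introduces the tangent angle: it integrates the Euler--Lagrange system once in the pinned frame, obtaining $\bigl\{(\vk^2-\lm)^2+(2\vk')^2\bigr\}(X',Y')$ in terms of constants $(c_1,c_2)$, shows the bracket equals the constant $c_1^2+c_2^2$ using $|(X',Y')|\equiv1$, and kills $c_2$ using $Y(0)=Y(L)=0$, $\vk(0)=\vk(L)=0$ and $l>0$; this yields $Y=-\tfrac{2}{c_1}\vk$ and $X=\tfrac{1}{c_1}\bigl(\int_0^s\vk^2\,dt-\lm s\bigr)$ directly, see \eqref{eq:2.10}, after which the shooting runs over the two scalars $(b,\lm)=(\vk'(0),\lm)$ and the sign of $c_1=\pm\sqrt{\lm^2+4b^2}$ produces the two families. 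Both routes deliver the same payoff (automatic $y$-closure and exhaustiveness), but the paper's is self-contained where yours outsources the frame identities to the classical literature. One bookkeeping slip to fix: in your second paragraph you claim the phase choice $\phi=\pm\K(p)$ together with the reflection $Y\mapsto -Y$ generates all four families; these two operations coincide (by antiperiodicity $\cn(x+\K(p),p)=-\cn(x-\K(p),p)$, flipping the phase flips the sign of $\vk$, i.e.\ reflects the curve), so they generate only $\hat{\vc}_n^{\pm}$. The second, independent dichotomy is the force orientation — the sign of $c_1$ in the paper — which you do identify correctly in your third paragraph as the source of $\check{\vc}_n^{\pm}$ and of the equation $-2\E(p)/\K(p)+1=l/L$; the final classification should therefore be counted as (reflection) $\times$ (force orientation), not (phase) $\times$ (reflection).
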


We remark that $\hat{\vc}_n^{-}$ and $\check{\vc}_n^{-}$ are obtained by the reflection of $\hat{\vc}_n^{+}$ and $\check{\vc}_n^{+}$ across the $x$-axis, respectively. 
The number $n\in \N\cup\{0\}$ in Theorem~\ref{thm:1.1} \Erase{implies}\Add{is} the number of inflection points of  $\hat{\vc}_n^{\pm}$ or $\check{\vc}_n^{\pm}$ in $0<x<l$ (see Figure~\ref{fig:1.1}).
In particular, from the formulas in Theorem~\ref{thm:1.1}, we infer that the curves $\check{\vc}_n^{\pm}$ have $(n+1)$-loops.
Moreover, \Erase{we can obtain the relation between $l/L$ and whether $\hat{\vc}_n^{\pm}$ can be represented as a graph}
\Add{we find that whether or not $\hat{\vc}_n^{\pm}$ can be represented as a graph depends on the ratio $l/L$}
(see Section~\ref{section:3} for these characterizations).

\begin{figure}[http]
\centering
\includegraphics[width=9.5cm]{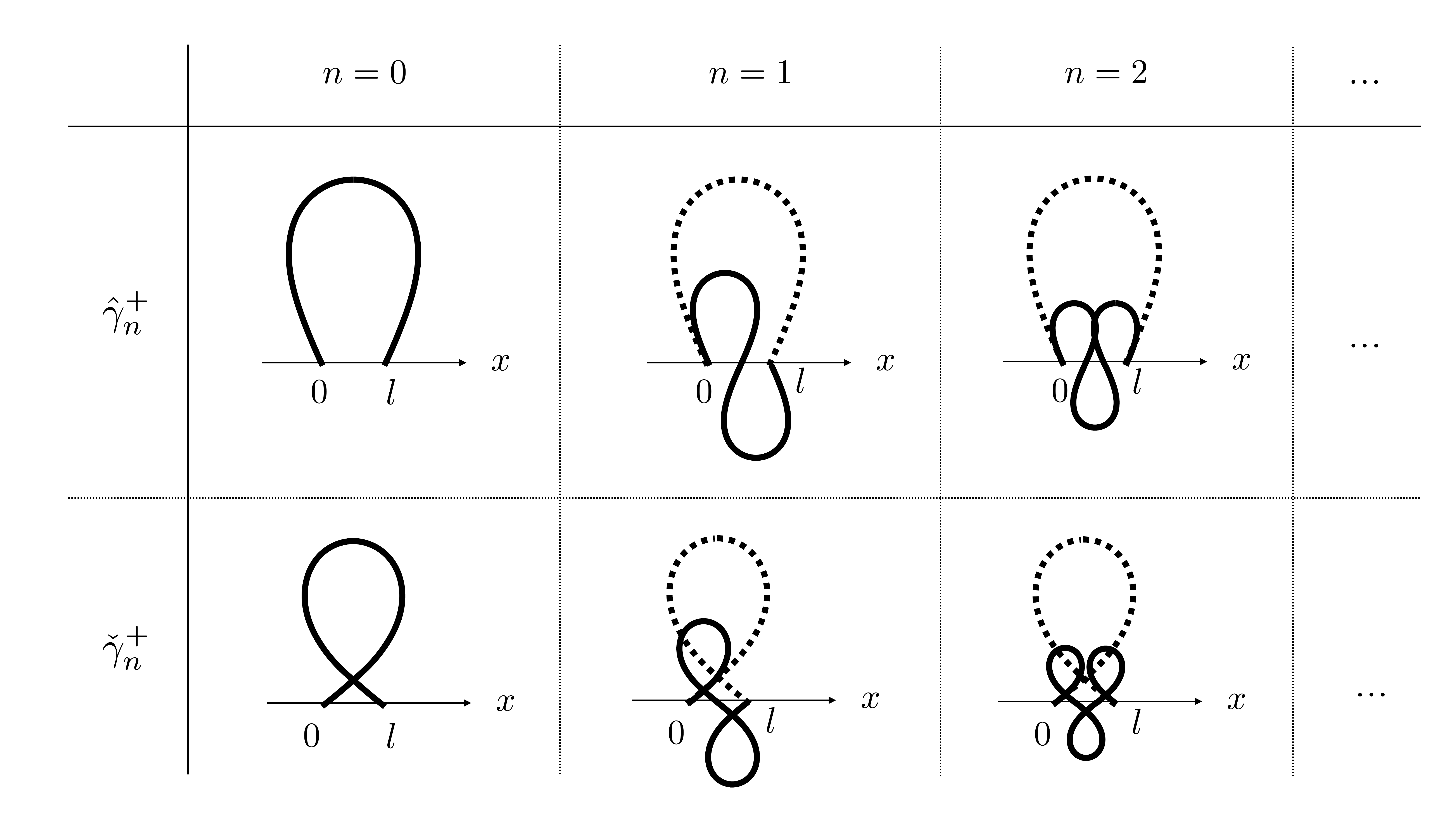} 
\caption{Critical points of $\W$ in $\pinA$ are given by Theorem~\ref{thm:1.1}.
According to \cite{Sin}, these curves are called wavelike elasticae.
}
\label{fig:1.1}
\end{figure}

Hence \Erase{thanks to}\Add{by} Theorem~\ref{thm:1.1}, not only the formulas but also the shapes of all critical points of $\W$ in $\pinA$ are deduced.
Furthermore, since we can compare the energy of each critical point (see Section~\ref{subsect:3.3}), we have:
\begin{theorem} \label{thm:1.2}
Let $0<l<L$ be arbitrary.
For the curves $\hat{\vc}_n^{\pm}$ and $\check{\vc}_n^{\pm}$ defined by Theorem~\ref{thm:1.1}, it holds that
\[
\W(\hat{\vc}_n^{\pm})= (n+1)^2\W(\hat{\vc}_0^{+}), \quad 
\W(\check{\vc}_n^{\pm})= (n+1)^2\W(\check{\vc}_0^{+}), \quad 
\W(\hat{\vc}_n^{+}) < \W(\check{\vc}_n^{+}),
\]
for $n\in\N\cup\{0\}$.
\end{theorem}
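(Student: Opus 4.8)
The plan is to read off the curvature of each critical point from the explicit arc length parametrizations in Theorem~\ref{thm:1.1}, evaluate the energy by reducing the integral to a whole number of periods of $\cn^2$, and then compare the two families through a monotonicity argument in the modulus.

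First I would compute the curvature. Since $\hat{\vc}_n^{+}$ is parametrized by arc length, differentiating $\hat{X}_n,\hat{Y}_n$ and using $\cn^2=1-\sn^2$ together with the derivative rules $\tfrac{d}{du}\sn=\cn\dn$ and $\tfrac{d}{du}\dn=-\hat{p}^2\sn\cn$ gives the unit tangent $(\hat{X}_n',\hat{Y}_n')=(1-2\hat{p}^2\sn^2,\,2\hat{p}\,\sn\dn)$ and, after one more differentiation, the curvature
\[
\vk(s)=2\hat{p}\,\omega\,\cn\big(\omega s+\K(\hat{p}),\hat{p}\big),\qquad \omega:=\frac{2(n+1)\K(\hat{p})}{L}.
\]
The same computation applies to $\check{\vc}_n^{+}$ (with $\check{p}$ and phase $-\K(\check{p})$), and passing from the $+$ to the $-$ curve only reverses the sign of $\vk$. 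Because $\vk^2$ is insensitive to that sign, the reflected curves carry equal energy, which already yields $\W(\hat{\vc}_n^{+})=\W(\hat{\vc}_n^{-})$ and $\W(\check{\vc}_n^{+})=\W(\check{\vc}_n^{-})$.

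Next I would evaluate $\W(\hat{\vc}_n^{+})=\int_0^L\vk^2\,ds$. Substituting $u=\omega s+\K(\hat{p})$ and using that $\cn^2$ has period $2\K(\hat{p})$ while the argument sweeps an interval of length $\omega L=2(n+1)\K(\hat{p})$, the integral collapses to $(n+1)$ copies of $\int_0^{2\K(\hat{p})}\cn^2$. With the standard value $\int_0^{2\K}\cn^2(u,p)\,du=\tfrac{2}{p^2}\big(\E(p)-(1-p^2)\K(p)\big)$ this produces
\[
\W(\hat{\vc}_n^{+})=\frac{16(n+1)^2}{L}\,F(\hat{p}),\qquad F(p):=\K(p)\big(\E(p)-(1-p^2)\K(p)\big),
\]
and likewise $\W(\check{\vc}_n^{+})=\tfrac{16(n+1)^2}{L}F(\check{p})$. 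The factor $(n+1)^2$ (coming from one power of $(n+1)$ in $\omega$ and one from the period count) proves the first two identities and reduces the strict inequality to $F(\hat{p})<F(\check{p})$.

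Finally I would establish $F(\hat{p})<F(\check{p})$. The defining relations are $2\E/\K-1=+l/L$ for $\hat{p}$ and $2\E/\K-1=-l/L$ for $\check{p}$; since this defining map is strictly monotone (the same monotonicity that makes $\hat{p},\check{p}$ unique in Theorem~\ref{thm:1.1}), it forces $\hat{p}<\check{p}$. It therefore suffices to show that $F$ is strictly increasing on $(0,1)$. Differentiating with the standard formulas $\tfrac{d\E}{dp}=(\E-\K)/p$ and $\tfrac{d\K}{dp}=(\E-(1-p^2)\K)/\big(p(1-p^2)\big)$, the mixed terms cancel and one finds
\[
F'(p)=\frac{\big(\E(p)-(1-p^2)\K(p)\big)^2}{p(1-p^2)}+p\,\K(p)^2>0\qquad\text{on }(0,1),
\]
so $F(\hat{p})<F(\check{p})$ and hence $\W(\hat{\vc}_n^{+})<\W(\check{\vc}_n^{+})$. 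The main obstacle is this last step: the monotonicity of $F$ hinges on correct bookkeeping of the elliptic-integral derivatives, after which the two surviving terms are manifestly positive. A secondary subtlety is the careful tracking of the phase constants and of the number of full periods of $\cn^2$ traversed on $[0,L]$, upon which the decisive $(n+1)^2$ scaling depends.
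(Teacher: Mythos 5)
Your proposal is correct and follows essentially the same route as the paper: both reduce the energy to $\tfrac{16(n+1)^2}{L}\K(p)\bigl(p^2\K(p)-\K(p)+\E(p)\bigr)$ via the integral of $\cn^2$ over whole periods (the paper's Lemma~\ref{lem:2.2} and equations \eqref{eq:3.20}, \eqref{eq:3.25}), order the moduli $\hat{p}<\check{p}$ by the monotonicity of $\E/\K$ (Lemma~\ref{lem:2.1}), and conclude by the strict monotonicity of $p\mapsto\K(p)\bigl(p^2\K(p)-\K(p)+\E(p)\bigr)$. The only cosmetic difference is that you verify this monotonicity by an explicit derivative formula, whereas the paper observes that both factors are positive and increasing (Proposition~\ref{prop:A.2}, Lemma~\ref{lem:2.3}, and the vanishing of the second factor at $p=0$); the underlying derivative computations are identical.
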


Moreover, we also obtain $|\W(\hat{\vc}_0^{+}) - \W(\check{\vc}_0^{+}) |\to0$ as $l\to0$.
Finally, problems (B) and (C) are solved as a corollary of Theorem~\ref{thm:1.2}:
\begin{theorem} \label{thm:1.3}
For each $0<l<L$, the global minimizer of $\W$ in $\pinA$ is $\hat{\vc}_0^{\pm}$, which implies
\begin{itemize}
\item[(i)] the uniqueness of global minimizers holds up to \Erase{the}reflection;
\item[(ii)] the global minimizer has no loop and \Erase{its inflection points exist only on endpoints.}\Add{the only inflection points are its endpoints.}
\end{itemize}
\end{theorem}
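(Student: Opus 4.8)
The plan is to combine an abstract existence result for the constrained minimization problem with the explicit classification already obtained in Theorems~\ref{thm:1.1} and~\ref{thm:1.2}. The key point is that once a global minimizer is known to exist, it is automatically a critical point of $\W$ in $\pinA$, hence one of the curves listed in Theorem~\ref{thm:1.1}; the energy comparison of Theorem~\ref{thm:1.2} then singles out $\hat{\vc}_0^{\pm}$, and both the uniqueness claim (i) and the geometric claim (ii) follow by inspection.

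First I would establish existence of a minimizer by the direct method. Parameterizing competitors by arc length on $[0,L]$, the identity $\int_0^L \vk^2\,ds = \int_0^L |\tilde{\vc}''|^2\,ds$ together with $|\tilde{\vc}'|\equiv 1$ shows that any minimizing sequence is bounded in $H^2([0,L];\R^2)$. Passing to a weakly convergent subsequence and using the compact embedding $H^2\hookrightarrow C^1$ in one dimension, the limit $\vc_\ast$ still satisfies $|\vc_\ast'|\equiv 1$, attains the prescribed endpoints $(0,0)$ and $(l,0)$, and has length $L$; weak lower semicontinuity of $\W$ makes $\vc_\ast$ a minimizer. A bootstrap applied to the Euler--Lagrange equation, a fourth-order ODE in $s$ with smooth coefficients, upgrades $\vc_\ast$ to a smooth regular curve, so that $\vc_\ast\in\pinA$. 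Since $\vc_\ast$ minimizes $\W$ subject to the length and endpoint constraints, the first variation with a Lagrange multiplier for the length shows that $\vc_\ast$ is a critical point of $\W$ in $\pinA$.

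Next I would invoke Theorem~\ref{thm:1.1}: the arc-length reparameterization of $\vc_\ast$ is one of $\hat{\vc}_n^{\pm}$ or $\check{\vc}_n^{\pm}$. Theorem~\ref{thm:1.2} yields $\W(\hat{\vc}_n^{\pm})=(n+1)^2\W(\hat{\vc}_0^{+})\ge\W(\hat{\vc}_0^{+})$ and $\W(\check{\vc}_n^{\pm})=(n+1)^2\W(\check{\vc}_0^{+})\ge\W(\check{\vc}_0^{+})>\W(\hat{\vc}_0^{+})$, where equality in the first chain holds precisely when $n=0$. Hence the least energy among all critical points is attained exactly by $\hat{\vc}_0^{+}$ and its reflection $\hat{\vc}_0^{-}$, which proves that the global minimizer is $\hat{\vc}_0^{\pm}$ and, since every other critical point has strictly larger energy, gives the uniqueness up to reflection asserted in~(i).

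Finally, claim~(ii) is read off from the formulas in Theorem~\ref{thm:1.1} and the remark following it: the index $n=0$ means $\hat{\vc}_0^{+}$ has no interior inflection point, while at $s=0$ and $s=L$ the argument of $\cn$ is an odd multiple of $\K(\hat{p})$, so $\cn$ vanishes there and the curvature is zero exactly at the two endpoints; moreover $\hat{\vc}_0^{+}$ belongs to the family $\hat{\vc}_n^{\pm}$ rather than to the loop-carrying family $\check{\vc}_n^{\pm}$, and therefore has no loop. I expect the main obstacle to lie in the existence step, namely ensuring that the weak $H^2$ limit remains a regular curve with the correct length and endpoints and that the resulting critical point is genuinely smooth; once this is in place, the comparison and the geometric conclusions are immediate consequences of Theorems~\ref{thm:1.1} and~\ref{thm:1.2}.
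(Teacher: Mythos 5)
Your overall strategy coincides with the paper's: existence of a minimizer by the direct method, regularity to place it in $\pinA$, then the observation that a minimizer is a critical point, so that Theorem~\ref{thm:1.1} lists the candidates, Theorem~\ref{thm:1.2} singles out $\hat{\vc}_0^{\pm}$, and (i), (ii) are read off from the classification. However, there is a genuine gap in your existence step. You run the direct method \emph{inside} $\pinA$: your minimizing sequence consists of smooth curves, and its weak $H^2$ limit $\vc_*$ is a priori only an $H^2$ curve, so it need not lie in $\pinA$. At that stage $\vc_*$ is only known to satisfy $\W(\vc_*)\le \inf_{\pinA}\W$; it is \emph{not} yet known to be a minimizer in any class that actually contains it. Your next move --- deriving the Euler--Lagrange equation and bootstrapping to smoothness --- requires the first variation of $\W+\lm\LL$ to vanish at $\vc_*$, and the competitors $\vc_*+\ve\vp$ appearing in that computation are themselves only $H^2$ curves. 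Minimality against the smooth class $\pinA$ gives no information about such perturbations, so the Euler--Lagrange equation for $\vc_*$ is unjustified as written; the argument is circular (you need smoothness to place $\vc_*$ in $\pinA$, and you need $\vc_*$ to be stationary among $H^2$ perturbations to prove smoothness).

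The paper resolves exactly this point by enlarging the admissible class \emph{before} running the direct method: in Appendix A (Theorem~\ref{thm:a.1}) the minimization is carried out over the $H^2$ class $H_{l,L}$, so the weak limit is automatically a minimizer in $H_{l,L}$ and the first-variation argument is legitimate there; Theorem~\ref{thm:a.2} then upgrades this $H_{l,L}$-minimizer to a smooth curve, which therefore lies in $\pinA$ and minimizes over $\pinA$ as well (see also Remark~\ref{rem:2.1}). Your proof becomes correct with the same adjustment, i.e.\ take the minimizing sequence in $H_{l,L}$, or else prove separately that $\inf_{H_{l,L}}\W=\inf_{\pinA}\W$ by an approximation argument, which is what your version implicitly assumes. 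A further, smaller point: in (ii), ``belongs to the family $\hat{\vc}_n^{\pm}$ rather than $\check{\vc}_n^{\pm}$'' does not by itself exclude loops; the no-loop property of $\hat{\vc}_0^{\pm}$ rests on the monotonicity and symmetry analysis of Section~\ref{section:3} (for $l/L>R_*$ the curve is a graph, and otherwise one combines Lemma~\ref{lem:5.15} with the monotonicity of $\hat{X}_0'$, in contrast with Lemma~\ref{lem:5.17}, which produces the loops of $\check{\vc}_n^{\pm}$).
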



The minimization problem for $\W$ has been studied among various classes. 
Let $\vc$ be a critical point of $\W$ under the assumption that the length is prescribed.
Then, by the classical Lagrange multiplier method, the curvature $\vk$ of $\vc$ satisfies 
\begin{align} \label{EL}
 2\pd_{s}^2 \vk + \vk^3 -\lm\vk=0 
\end{align}
for some $\lm\in\R$. 
Euler \cite{Euler} derived the equation \eqref{EL} in 18th century and 
the curves whose curvature solves  \eqref{EL} are called \textit{Euler's elasticae}.
The shapes of Euler's elasticae are well known, \Erase{e.g., see}\Add{see e.g.,} \cite[Figure 11]{lev}.

The patterns of elasticae among closed curves are well understood.
It is shown in \cite{AKS13, LS_85, Sach12_1} that 
any critical point is an $n$-fold circle or an $n$-fold figure-of-eight, any local minimizer is an $n$-fold circle or the $1$-fold figure-of-eight, and a global minimizer is the $1$-fold circle. 
\Add{Moreover, critical points have been studied not only in $\R^2$ but also in higher-dimensional spaces, other manifolds and the hyperbolic space
(e.g. \cite{Koiso92, LS_JDG, LS_LMS, Sin}).}
In the case of open curves, however, there have been less results than the case of closed curves.
For example, clamped boundary value problems (the tangents at endpoints are prescribed) are considered:
Watanabe \cite{Watanabe2014} deduced the representation formulas \Erase{under a restriction of tangents at endpoints}\Add{for a special case}; 
Miura \cite{Miura20} revealed the shapes of minimizers in view of the phase transitions.
However, some problems such as \Erase{the uniqueness}\Add{uniqueness (B)} are still open.
One of difficulties for open curves is the treatment of boundary conditions.

Furthermore, another difficulty arises from the multiplier $\lm$.
In the case of closed curves, representation formulas have been obtained, taking into account the multiplier (e.g., \cite{MMY_09, MMY_13, Yana}).
In the case of open curves, the equation \eqref{EL} under Navier boundary conditions (the curvatures at endpoints are prescribed) has been solved in the case of $\lm=0$, see e.g. \cite{DG_07, DG_09, Man}.
To the best of the author's knowledge, however, \Erase{there is no result of representation formulas for open curves with handling the multiplier.}\Add{there are no results obtaining representation formulas for open curves with non-zero $\lm$, except for \cite{Watanabe2014}.}

In order to overcome these difficulties, we apply the shooting method to the problem.
The shooting method enables us to deal with the boundary conditions \Add{and} the multiplier simultaneously
(see Section~\ref{section:3}).
Consequently we obtain
the formula, \Erase{the characterization of curves (such as the shapes)}\Add{properties of curves (such as inflection points, the number of loops)}
and the uniqueness of minimizers, as we state in Theorems~\ref{thm:1.1} and \ref{thm:1.3}.

Another emphatic point which the formulas give is the relation between minimizers and \Erase{the number of inflection numbers (inflection means sign-changing points of the curvature).}\Add{the number of inflection points (points where the curvature changes sign).}
It is already known that the global minimizer has at most two inflection points by \Erase{different ways from this paper}\Add{different methods} (see \cite{Born, Sach08_1, Sach08_2, Sach14}).
Hence Theorem~\ref{thm:1.3} yields \Erase{another aspect to prove the relation}\Add{another point of view on the relation} between minimizers and inflection points.
The author expects that the method developed in this paper is applicable to the analysis of \eqref{EL} with Navier and clamped boundary conditions. 

\Add{
The elastic energy can be regarded as the one-dimensional version of the Willmore functional.
The Willmore functional is the integral of the squared mean curvature of surfaces, and its critical points are called Willmore surfaces. 
See e.g. \cite{Dall12, Nit, Scha10} for boundary value problems for Willmore surface and
see e.g. \cite{BDF10, DDG, DDW13, DFGS11, Eich17, Man18} for Willmore surfaces of revolution.
Recently, Mandel \cite{Man18} obtained explicit formulas for Willmore surfaces of revolution (with no constraints).
Our strategy for dealing with the Lagrange multiplier may be useful for extending the results of  \cite{Man18} to some constrained problems.}

This paper is organized as follows. 
In Sect.~\ref{section:2} we shall consider the Euler-Lagrange equation for $\W$ and collect the notations and facts related to the Jacobi Elliptic function.
In Sect.~\ref{section:3} we apply the shooting method to the Euler-Lagrange equation so that  Theorems~\ref{thm:1.1} and \ref{thm:1.2} are proved.



\section{Preliminaries} \label{section:2}

\subsection{Euler-Lagrange equation} \label{subsect:2.1}

The following equation \eqref{eq:EL} is famous for the Euler-Lagrange equation of the elastic  energy:

\begin{definition}
We call $\vc\in\pinA$ \textit{critical curve} if there exists a \add{constant} $\lambda\in\R$ such that the curvature $\vk$ of $\vc$ satisfies 
\begin{align}\label{eq:EL}
\pd_{s}^2\vk+\frac{1}{2}\vk^3-\frac{\lm}{2}\vk = 0 \quad \text{and} \quad \vk(0)=\vk(L)=0.
\end{align}
\end{definition}

\Add{We remark here that it is not restrictive to consider smooth curves only:
\begin{remark}\label{rem:2.1}
As the admissible set, 
\begin{align*}
H_{l,L} := \Set{ \vc \in H^2\big(0,1;\R^2 \big) | 
\begin{array}{l}
\vc(0)=(0,0),  \ 
\vc(1)= (0,l), \\
\LL(\vc)=L, \ \ \ \ \ 
\vc'(t)\ne0 
\end{array}
}.
\end{align*}
seems more suitable than $\pinA$.
Nevertheless we choose $\pinA$ as the admissible set since the regularity of $\vc\in H_{l,L}$ satisfying \eqref{eq:EL} can be always improved, up to a reparameterization (for details, see Appendix A).
\end{remark}
}

Hereafter we shall show that \eqref{eq:EL} holds if and only if $\vc$ is a critical point of $\W$ in $\pinA$ and shall deduce the related formulas.
Let $\vc$ be a curve belonging to $\pinA$ and $\tilde{\vc}(s)=(X(s), Y(s))$ be its arc length parameterization.
\add{For sufficiently small $\ve_0>0$, consider the variation $\{\vc_{\ve}\}_{|\ve|<\ve_0}$.
Set
}
\[ \vc_{\ve}(s) = (\phi(\ve, s), \psi(\ve,s))  \quad \text{for}\quad s\in [0,L], \]
\add{where $\phi$, $\psi \in C^{\infty}((-\ve_0, \ve_0) \times [0,L])$ and } 
\begin{itemize}
\item $\phi(0,s) =X(s)$, \  $\psi(0,s)=Y(s)$ \ for $s\in[0,L]$, 
\item $\phi(\ve,0)= \psi (\ve,0)= \psi(\ve,L)=0$, \  $\phi (\ve,L)=  l $.
\end{itemize}
We note here that $s$ does not generally give the arc length parameter for $\vc_{\ve}$.
Setting 
\[ \ell(\ve,s) = \sqrt{\pd_s \phi (\ve, s)^2 + \pd_s \psi (\ve, s)^2},
\]
we notice that 
\[ \vt(s) = \int_0^s \ell(\ve,\zeta) \,d\zeta \]
gives the arc length parameter of $\vc_{\ve}$.
We denote the unit tangent vector $\mathbf{t}$ and unit normal vector $\mathbf{n}$ of $\vc_{\ve}$  by 
\[ 
\add{\mathbf{t}(\ve,s)}=\ell(\ve,s)^{-1}\big(\pd_s \phi (\ve, s), \pd_s \psi (\ve, s)  \big), \quad 
\add{\mathbf{n}(\ve,s)}=   \left(
 \begin{array}{cc}
      0  & -1 \\
      1 & 0 
    \end{array} \right)
    \add{\mathbf{t}(\ve,s)},
\]
respectively.
Hence the curvature of $\vc_{\ve}$ is given by 
\begin{align*}
\add{\frac{\mathbf{n}(\ve, s)\cdot \mathbf{t}'(\ve, s)}{\ell(\ve,s)}} =
\frac{-\pd^2_s\phi (\ve,s) \pd_s\psi  (\ve,s)+ \pd_s\phi  (\ve,s) \pd^2_s\psi (\ve,s)   }{\ell(\ve,s)^3} 
=: \frac{m(\ve,s) }{\ell(\ve,s)^3} ,
\end{align*}
where $'$ denotes the derivative with respect to the parameter $s$.
Therefore we can identify the elastic  energy of $\vc_{\ve}$ with
\[ \W(\vc_{\ve})= \int_0^L m(\ve,s)^2 \ell(\ve,s)^{-5} ds \] 
and it holds that 
\Add{
\begin{align} \label{eq:2.2}
\frac{d}{d\ve} \W(\vc_{\ve}) 
& = \int_0^L \bigg\{ -5\ell^{-7}\Big(\pd_s \phi \cdot  \pd_{\ve} \pd_s \phi +\pd_s \psi \cdot \pd_{\ve} \pd_s \psi  \Big)m^2  \notag  \\
&\quad +2\ell^{-5}m \Big(  \pd_{\ve}\pd_s^2\psi \cdot \pd_s\phi 
 + \pd_s^2\psi\cdot \pd_{\ve}\pd_s\phi  
 -\pd_{\ve}\pd_s^2\phi \cdot \pd_s\psi - \pd_s^2\phi \cdot \pd_{\ve}\pd_s\psi
\Big) \bigg\}\,ds \notag
\\
&= \int_0^L \Add{\pd_s}\bigg\{ 5\ell^{-7}m^2\pd_s\phi -2\ell^{-5}m\,\pd_s^2\psi - 2 \pd_s\Big(
\ell^{-5} m\,\pd_s\psi \Big)\bigg\} \pd_{\ve}\phi \,    ds \notag  \\
&\ \ + \int_0^L \Add{\pd_s} \bigg\{ 5\ell^{-7}m^2\pd_s\psi +2\ell^{-5}m\,\pd_s^2\phi +2 \pd_s\Big(
\ell^{-5} m\,\pd_s\phi \Big)\bigg\} \pd_{\ve}\psi \,    ds  \\
&\ \ + 2\bigg[ \ell^{-5}m\,\pd_s\phi ( \pd_{\ve}\pd_s\psi) \bigg]_0^L
 -2\bigg[ \ell^{-5}m\,\pd_s\psi ( \pd_{\ve}\pd_s\phi) \bigg]_0^L, \notag
\end{align}
}
where we used \Erase{the integral}\Add{integration} by parts and $\pd_{\ve}\phi = \pd_{\ve}\psi =0$ on $\{0,L\}$.
On the other hand, the first variation formula of $\LL$ is given by 
\Add{
\begin{align*}
\frac{d}{d\ve} \LL(\vc_{\ve})  &= \frac{d}{d\ve}\int_0^L \ell(\ve,s) \,ds  \\
&=\int_0^L- \bigg\{  \pd_s\Big(\ell^{-1} \pd_s\phi  \Big) \pd_{\ve}\phi  
+  \pd_s\Big(\ell^{-1} \pd_s\psi  \Big) \pd_{\ve}\psi    \bigg\} ds.
\end{align*}
}
According to the Lagrange multiplier method, $\vc$ is a critical point of $\W$ in $\pinA$ if and only if
there exists some $\lm\in\R$ such that
\begin{align} \label{eq:2.3}
\frac{d}{d\ve} \Big( \W(\vc_{\ve}) + \lm \LL(\vc_{\ve}) \Big) \bigg|_{\ve=0} =0.
\end{align}
By recalling that $\ell(0,s)\equiv1$ and $m(0,s)=\vk(s)$, 
\add{and
restricting $\phi(\ve, \cdot)$ and $\psi(\ve, \cdot)$ to $C^{\infty}_{\rm c}(0,L)$ in \eqref{eq:2.3},
}
we infer from \eqref{eq:2.3} that
\begin{align} \label{eq:2.5}
\begin{split}
&\big( 5\vk^2 X' -4 \vk Y'' -2 \vk'  Y' \big)' -\lm X'' = 0,  \\
&\big( 5\vk^2 Y' +4 \vk X'' +2 \vk'  X' \big)' -\lm Y'' = 0, 
\end{split}
\end{align}
where $'$ denotes the derivative with respect to the parameter $s$.
\add{Then \eqref{eq:2.3} is reduced to
\begin{align}\label{eq:0512-1}
\left(
2\bigg[ \ell^{-5}m\,\pd_s\phi ( \pd_{\ve}\pd_s\psi) \bigg]_0^L
 -2\bigg[ \ell^{-5}m\,\pd_s\psi ( \pd_{\ve}\pd_s\phi) \bigg]_0^L 
 \right)\bigg|_{\ve=0}=0
\end{align}
for all $ \phi, \psi \in C^{\infty}((-\ve_0, \ve_0)\times [0,L])$.
Then choosing $ \phi, \psi $ as functions which satisfy
\[
\pd_s \phi(0,\ve)=\ve, \quad \pd_s \phi(L,\ve)=X'(L), \quad 
\pd_s \psi(0,\ve)=1, \quad \pd_s \psi(L,\ve)=Y'(L), 
\]
we infer from \eqref{eq:0512-1} that $\vk(0)=0$.
Similarly we also obtain $\vk(L)=0$.
Thus we see that any critical point $\vc$ of $\W$ in $\pinA$ satisfies the condition
}
\begin{align} \label{eq:BC}
\vk(0)= \vk(L)=0.
\end{align}
Integrating \eqref{eq:2.5}, we obtain 
\begin{align*}
\big( 5\vk^2-\lm \big) X' -4 \vk Y'' -2 \vk'  Y' = c_1, \quad
\big( 5\vk^2-\lm \big) Y' +4 \vk X'' +2 \vk'  X' = c_2
\end{align*}
for some $c_1, c_2 \in\R$.
This together with $X''=-\vk Y'$ and $Y''=\vk X'$ yields 
\Add{
\[
(\vk^2-\lm) X' -2\vk' Y' =c_1, \quad (\vk^2-\lm) Y' + 2 \vk' X' =c_2.
\]
Solving the above equations for $X'$ and $Y'$ respectively, we obtain 
}
\begin{align} \label{eq:2.7}
\begin{split}
& \Big\{ (\vk^2 - \lm)^2 + (2\vk')^2 \Big\} X' = c_1(\vk^2 - \lm) + 2\vk' c_2, \\
& \Big\{ (\vk^2 - \lm)^2 + (2\vk')^2 \Big\} Y' = -2\vk' c_1 + c_2(\vk^2 - \lm) .
\end{split}
\end{align} 
Since $s$ denotes the arc length parameter of $\vc$, it follows that $X'(s)^2 + Y'(s)^2=1$ and hence we obtain
\begin{align}\label{eq:0308-1} 
\Big\{ (\vk^2 - \lm)^2 + (2\vk')^2 \Big\} ^2 = (c_1^2 + c_2^2) \Big\{ (\vk^2 - \lm)^2 + (2\vk')^2 \Big\} .
\end{align}
\Erase{
Here $(\vk^2 - \lm)^2 + (2\vk')^2=0$ never occurs, since if it holds, $\vc$ is a segment by \eqref{eq:BC}
} 
\Erase{ 
and we obtain the contradiction.
Therefore 
 $(\vk^2 - \lm)^2 + (2\vk')^2= c_1^2 + c_2^2$ follows.
 }
\Add{
Here it follows that $c_1^2 + c_2^2>0$. 
In fact, if $c_1^2 + c_2^2=0$ holds, we infer from \eqref{eq:0308-1} that $(\vk^2 - \lm)^2 + (2\vk')^2\equiv0$.
Then we notice that $\vk^2 \equiv \lm$, which implies $\vc$ is a line segment or a circle.
This contradicts $\vc \in \pinA$ and hence  $c_1^2 + c_2^2 >0$ follows.
Since $\vc\in\pinA$ implies that $(\vk(s)^2-\lm)^2 + 4(\vk'(s))^2$ is continuous on $[0,L]$, we deduce from \eqref{eq:0308-1} that
\[ (\vk^2 - \lm)^2 + (2\vk')^2 \equiv 0 \ \ \text{or}\ \ c_1^2 + c_2^2. \]
Recalling that $(\vk^2 - \lm)^2 + (2\vk')^2 \equiv 0$ does not occur, we conclude that 
\[ (\vk^2 - \lm)^2 + (2\vk')^2 \equiv  c_1^2 + c_2^2. \]
}
Combining this with \eqref{eq:2.7}, we have 
 \begin{align}
 X'(s) &= \frac{c_1}{c_1^2 + c_2^2} \Big( \vk(s)^2 - \lm \Big) + \frac{2c_2}{c_1^2 + c_2^2} \vk'(s),  \label{eq:2.9}\\
 Y'(s) &=- \frac{2\Add{c_1}}{c_1^2 + c_2^2} \vk'(s) +\frac{c_2}{c_1^2 + c_2^2} \Big( \vk(s)^2 - \lm \Big). \label{eq:2.91}
 \end{align}
Since $\vc\in\pinA$ implies that $Y(0)=Y(L)=0$, integrating \eqref{eq:2.91} and using \eqref{eq:BC}, we obtain
 \[ 
 0= \frac{c_2}{c_1^2 + c_2^2} \int_0^L \Big( \vk(s)^2 - \lm \Big)ds, 
 \]
which gives $c_2=0$. 
In fact, if not, then $\int_0^L ( \vk(s)^2 - \lm )ds=0$ holds. 
By \eqref{eq:2.9} it holds that 
\add{$l=0$,}
which contradicts 
\add{$l>0$}.
Therefore for any critical point $\vc$ of $\W$ in $\pinA$, its arc length reparameterization $\tilde{\vc}(s)=(X(s),Y(s))$ can be represented by
\begin{align} \label{eq:2.10}
X(s)= \frac{1}{c_1}\left( \int_0^s \vk(t)^2\,dt -\lm s  \right), \quad
Y(s)=-\frac{2}{c_1} \vk(s).
\end{align}
\Erase{
Moreover, we find that \eqref{eq:EL} follows from the formulas \eqref{eq:2.10}.}
\Add{
This together with $Y''=\vk X'$ implies that
\[ \vk'' = -\frac{c_1}{2}Y'' = -\frac{c_1}{2}\vk X' = -\frac{1}{2}\vk (\vk^2-\lm).   \]
Therefore any critical point $\vc\in \pinA$ satisfies \eqref{eq:EL}.
}

\subsection{Jacobi elliptic functions} \label{subsect:2.2}
Here we collect the notations and facts related to the Jacobi elliptic functions.
Let $\K(p)$ and $\E(p)$ be the complete elliptic integral of the first and second kind, i.e., 
\[ \K(p):=\int_0^1 \frac{1}{\sqrt{1- z^2}\sqrt{1-p^2 z^2}}\,dz, \quad 
\E(p):=\int_0^1\frac{\sqrt{1-p^2  z^2}}{\sqrt{1-z^2}}\,dz,
 \]
for $0\leq p <1$.

\begin{proposition}  \label{prop:A.2}
The function $p\mapsto \K(p)$ is 
\add{monotonically}
increasing and $p\mapsto \E(p)$ is 
\add{monotonically} decreasing. 
Moreover, it holds that
\begin{align*}
\frac{d}{dp} \K(p)= \frac{\E(p)}{(1-p^2)p} - \frac{\K(p)}{p}, \quad
\frac{d}{dp} \E(p)= \frac{\E(p)}{p} - \frac{\K(p)}{p},
\end{align*}
for $p\in(0,1)$.
\end{proposition}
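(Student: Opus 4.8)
The plan is to treat the two derivative identities as the substance of the proposition and to read off the monotonicity from them (it is in any case immediate: for fixed $z\in(0,1)$ the integrand defining $\K(p)$ is strictly increasing in $p$, while that defining $\E(p)$ is strictly decreasing, so $\K$ increases and $\E$ decreases). Before differentiating I would justify differentiation under the integral sign: on any compact subinterval $[0,p_0]\subset[0,1)$ the factor $1-p^2z^2$ is bounded below by $1-p_0^2>0$, the only singularity of the integrands and of their $p$-derivatives is the integrable $1/\sqrt{1-z^2}$ at $z=1$, so the $p$-derivatives are dominated by an $L^1(0,1)$ function; the dominated convergence theorem then legitimises differentiating under the integral.

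The formula for $\E'$ is routine. Differentiating under the integral gives $\E'(p)=-p\int_0^1 z^2/(\sqrt{1-z^2}\sqrt{1-p^2z^2})\,dz$. Writing $\sqrt{1-p^2z^2}=(1-p^2z^2)/\sqrt{1-p^2z^2}$ in the definition of $\E$ yields the algebraic identity $\E(p)=\K(p)-p^2 D(p)$, where $D(p):=\int_0^1 z^2/(\sqrt{1-z^2}\sqrt{1-p^2z^2})\,dz$; hence $D=(\K-\E)/p^2$ and $\E'(p)=-pD=(\E-\K)/p$, which is the claimed identity. Since $\sqrt{1-p^2z^2}<1/\sqrt{1-p^2z^2}$ on $(0,1)$ we have $\E<\K$, so $\E'<0$, recovering the monotonicity of $\E$.

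The identity for $\K'$ is the main obstacle, because differentiating under the integral produces $\K'(p)=p\,I(p)$ with $I(p):=\int_0^1 z^2/(\sqrt{1-z^2}(1-p^2z^2)^{3/2})\,dz$, an integral whose cubic power of $\Delta:=\sqrt{1-p^2z^2}$ in the denominator is not directly reducible to $\K$ and $\E$. My approach is an integration by parts against the antiderivative $F(z):=z\sqrt{1-z^2}/\sqrt{1-p^2z^2}$, which vanishes at both endpoints $z=0$ and $z=1$, so that $\int_0^1 F'(z)\,dz=0$. Computing $F'$ produces two groups of terms, one carrying $\Delta$ to the first power and one carrying $\Delta^{3}$; the latter contains a factor $z^2\sqrt{1-z^2}/\Delta^3$ which I reduce by the substitution identity $p^2z^2=1-\Delta^2$ (equivalently $z^2=(1-\Delta^2)/p^2$). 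This rewrites the $\Delta^{3}$-integrals in terms of $I$ and $D$ only, and the $\Delta^{-1}$-integrals in terms of $\K$ and $D$. The vanishing of $\int_0^1 F'\,dz$ then collapses to a single linear relation among $\K$, $D$ and $I$, namely $\K-D+(p^2-1)I=0$; solving for $I$ and inserting $D=(\K-\E)/p^2$ gives $I=\E/(p^2(1-p^2))-\K/p^2$, whence $\K'(p)=pI=\E/((1-p^2)p)-\K/p$ as asserted. Finally $I(p)>0$ (its integrand is positive) gives $\K'>0$, confirming that $\K$ is increasing. The only delicate points are the correct choice of $F$ and the bookkeeping in the reduction to $I$ and $D$; once $F$ is fixed the remainder is a short algebraic manipulation.
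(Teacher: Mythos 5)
Your proof is correct. Note, however, that the paper does not actually prove Proposition \ref{prop:A.2} at all: it declares the formulas well known and refers the reader to Byrd--Friedman \cite[p.~282]{Byrd}, so there is no argument to match yours against. What you have written is a genuine, self-contained derivation: the justification of differentiation under the integral sign by domination on compact subintervals $[0,p_0]\subset[0,1)$ is sound; the identity $\E(p)=\K(p)-p^2D(p)$ with $D(p)=\int_0^1 z^2/(\sqrt{1-z^2}\sqrt{1-p^2z^2})\,dz$ immediately gives $\E'(p)=-pD(p)=(\E(p)-\K(p))/p$; and your integration by parts against $F(z)=z\sqrt{1-z^2}/\sqrt{1-p^2z^2}$ (which indeed vanishes at both endpoints) yields, after the reductions $p^2z^2=1-\Delta^2$ and $1=\Delta^2+p^2z^2$, exactly the linear relation $\K-D+(p^2-1)I=0$, hence $\K'(p)=pI(p)=\E(p)/((1-p^2)p)-\K(p)/p$ --- I have checked the bookkeeping and it closes. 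The monotonicity statements follow twice over, once from the integrands directly and once from the signs $\K'=pI>0$ and $\E'=(\E-\K)/p<0$ using $\E<\K$. The trade-off: the paper's citation keeps the exposition short and leans on a standard handbook (where these formulas are usually obtained via Legendre's differential relations for the elliptic integrals), whereas your argument makes the statement self-contained at the cost of about a page, using only calculus; either is acceptable, and yours would be a reasonable appendix if self-containedness were desired.
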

The above formulas are well known so we omit the proof
(\Erase{e.g., see}\Add{see e.g.} \cite[p.282]{Byrd}).

\begin{lemma}\label{lem:2.1}
Let $\vp$ 
\add{$:[0,1) \to \R$ be the function}
defined by 
$
\vp(p):=\E(p) / \K(p) .
$ 
Then $\vp(p)$ is 
\add{monotonically}
decreasing, i.e., it holds that
\begin{align*}
\frac{d \vp}{dp}(p)<0 \quad \text{for} \quad p\in(0,1).
\end{align*}
Moreover, $\vp(0)=1$ and  $\lim_{p\uparrow1}\vp(p)=0$.
\end{lemma}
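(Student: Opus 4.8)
The plan is to differentiate $\vp=\E/\K$ directly and read off the sign from Proposition~\ref{prop:A.2}, then handle the two limits separately. By the quotient rule,
\[
\vp'(p) = \frac{\E'(p)\K(p) - \E(p)\K'(p)}{\K(p)^2},
\]
and since $\K(p)>0$ the sign of $\vp'$ is that of the numerator $N(p):=\E'\K-\E\K'$. Substituting the two derivative formulas from Proposition~\ref{prop:A.2}, namely $\E'=(\E-\K)/p$ and $\K'=[\E-(1-p^2)\K]/\big((1-p^2)p\big)$, and clearing the common denominator $(1-p^2)p>0$, I expect $N(p)$ to reduce, up to this positive factor, to the quadratic expression $-(1-p^2)\K^2+2(1-p^2)\E\K-\E^2$ in $\K$ and $\E$.

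The crucial step is to recognize that this quadratic is manifestly negative. I would complete the square to write it as
\[
-(1-p^2)\K^2 + 2(1-p^2)\E\K - \E^2 = -\big[(1-p^2)\K - \E\big]^2 - p^2(1-p^2)\K^2,
\]
a sum of two non-positive terms whose second summand is strictly negative for $p\in(0,1)$ because $\K(p)>0$. Hence $N(p)<0$ and $\vp'(p)<0$ on $(0,1)$. I expect this factorization to be the main obstacle: the raw expression obtained after clearing denominators is not obviously signed, and the whole argument hinges on spotting that it splits as a negative perfect square plus a strictly negative term.

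For the boundary behaviour, the value at $0$ is immediate since $\K(0)=\E(0)=\int_0^1(1-z^2)^{-1/2}\,dz=\pi/2$, giving $\vp(0)=1$. For the limit as $p\uparrow1$ I would treat numerator and denominator separately: $\E(1)=\int_0^1 1\,dz=1$ is finite (and $\E$ is continuous up to $p=1$ by dominated convergence, the integrand being dominated by $(1-z^2)^{-1/2}$), whereas $\K(p)$ increases to $\int_0^1(1-z^2)^{-1}\,dz=+\infty$ by the monotone convergence theorem, since the integrand $[(1-z^2)(1-p^2z^2)]^{-1/2}$ is increasing in $p$. Therefore $\vp(p)=\E(p)/\K(p)\to 0$, which completes the proof.
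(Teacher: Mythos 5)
Your proof is correct, but at the decisive step it takes a genuinely different (and heavier) route than the paper. The paper's argument is one line: Proposition~\ref{prop:A.2} asserts that $\K$ is increasing and $\E$ is decreasing, so $\K'(p)>0$ and $\E'(p)<0$ on $(0,1)$; since also $\E(p),\K(p)>0$, the numerator $\E'(p)\K(p)-\E(p)\K'(p)$ is a sum of two negative terms, and no further computation is needed. You instead substitute the explicit derivative formulas, clear the positive factor $p(1-p^2)$, and complete the square; your identity
\[
-(1-p^2)\K^2+2(1-p^2)\E\K-\E^2=-\big[(1-p^2)\K-\E\big]^2-p^2(1-p^2)\K^2
\]
is algebraically correct (I checked it), and it does yield strict negativity. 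So the "main obstacle" you flag --- that the cleared expression is not obviously signed --- is real only within your chosen route; it dissolves entirely if one reads off the signs of $\E'$ and $\K'$ from the monotonicity statement already provided. What your computation buys in exchange is a self-contained, quantitative identity for $\vp'$ that does not presuppose the monotonicity of $\E$ and $\K$ (it in fact reproves it), and could give explicit lower bounds on $|\vp'|$ if one ever needed them. Your treatment of the endpoint behaviour ($\vp(0)=1$ via $\E(0)=\K(0)=\pi/2$, and $\vp(p)\to0$ via $\E(1)=1$, $\K(p)\to\infty$) coincides with the paper's, with the added care of justifying the limits by dominated and monotone convergence, which the paper simply asserts.
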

This clearly holds since it follows from Proposition~\ref{prop:A.2} that
\[ 
\vp'(p) = \frac{\E'(p)\K(p)- \E(p)\K'(p)}{\K(p)^2}<0 \quad \text{for}\quad 0<p<1.
\]
Moreover, $\E(0)=\K(0)=\pi/2$ yields $\vp(0)=1$ and combining $\E(1)=1$ with $\lim_{p\uparrow1}\K(p)=\infty$, we obtain $\lim_{p\uparrow1}\vp(p)=0$.

Next, we mention some basic properties of Jacobi's elliptic functions $\sn$, $\cn$, $\dn$.
\Erase{
We define Jacobi's $\sn$ function by}
\Erase{
for $x\in [-\K(p), \K(p)]$, and $\sn(x,p)=-\sn(x+2\K(p),p)$ for $x\in\R$.
} 
\Erase{
Jacobi's $\cn$, $\dn$ functions are defined by 
}
\Erase{
The modulus $p$ is a number in $(0,1)$.
}
\Add{The elliptic integral of the first kind is defined by
\[
x(\phi):=\int_0^{\phi} \frac{d\theta}{\sqrt{1-p^2 \sin^2\theta}}, \quad 0\leq p \leq 1.
\]
Denoting the inverse of $x(\phi)$ by $\mathrm{am}(x,p)=\phi$, the Jacobi elliptic functions are given by
\[
\sn\,x=\sn(x,p)=\sin \phi, \quad \cn\,x=\cn(x,p)=\cos\phi
\]
and
\[
\dn\,x = \dn(x,p) = \sqrt{1-p^2 \sin^2 \phi}.
\]
}
The function $\cn(x,p)$ is $2\K(p)$-antiperiodic, i.e., \add{$\cn(x+2\K(p),p)= -\cn(x,p)$} for $x\in\R$ and this together with $\cn(\K(p),p)=0$ gives
\begin{align} \label{eq:2.11}
 \cn(x,p)=0  \iff  x=(2n+1)\K(p) \quad  \text{for} \quad n\in\Z. 
\end{align}
Moreover, for $0<p<1$ the following differential formula holds:
\begin{align} \label{eq:A.1}
\frac{d}{dx} \cn(x,p) = -\sn (x,p)\dn(x,p).
\end{align}

\begin{lemma}\label{lem:2.2}
For each $p\in(0,1)$ it holds that
\begin{align} \label{eq:2.12}
\int_0^{\K(p)} \cn(x,p)^2 \,dx = \frac{p^2\K(p)-\K(p) +\E(p)}{p^2}.
\end{align}
\end{lemma}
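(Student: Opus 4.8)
The plan is to evaluate the integral directly by the substitution $z=\sn(x,p)$, which turns this Jacobi-function integral into one of the rational-trigonometric integrands appearing in the very definitions of $\K$ and $\E$ recorded above in Subsection~\ref{subsect:2.2}. On the interval $[0,\K(p)]$ the function $\sn(\cdot,p)$ increases monotonically from $\sn(0,p)=0$ to $\sn(\K(p),p)=1$, while $\cn$ and $\dn$ remain nonnegative there; this is exactly the range of $x$ needed so that the substitution becomes a genuine change of variables onto $z\in[0,1]$.

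First I would record the companion of \eqref{eq:A.1}, namely $\frac{d}{dx}\sn(x,p)=\cn(x,p)\dn(x,p)$, which follows from the definition of the Jacobi functions in the same manner as \eqref{eq:A.1}. Setting $z=\sn(x,p)$ then gives $dz=\cn\,\dn\,dx$, and on $[0,\K(p)]$ the Pythagorean relations $\cn^2=1-z^2$ and $\dn^2=1-p^2z^2$ (with both factors nonnegative) yield $dx=dz/(\sqrt{1-z^2}\sqrt{1-p^2z^2})$. Substituting into the left-hand side of \eqref{eq:2.12} and using $\cn(x,p)^2=1-z^2$ collapses the integral to
\[
\int_0^{\K(p)}\cn(x,p)^2\,dx=\int_0^1\frac{\sqrt{1-z^2}}{\sqrt{1-p^2z^2}}\,dz.
\]

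To finish, I would write the integrand as $(1-z^2)/(\sqrt{1-z^2}\sqrt{1-p^2z^2})$ and split off the constant term, so that the first piece is precisely $\K(p)$ by definition. For the remaining piece $\int_0^1 z^2/(\sqrt{1-z^2}\sqrt{1-p^2z^2})\,dz$, I would rewrite the definition $\E(p)=\int_0^1 (1-p^2z^2)/(\sqrt{1-z^2}\sqrt{1-p^2z^2})\,dz$ to obtain the identity $\int_0^1 z^2/(\sqrt{1-z^2}\sqrt{1-p^2z^2})\,dz=(\K(p)-\E(p))/p^2$. Combining the two pieces gives $\K(p)-(\K(p)-\E(p))/p^2$, which is exactly the right-hand side of \eqref{eq:2.12}.

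The computation is essentially routine once the substitution is in place; the only point requiring a little care—and the step I would treat as the main (mild) obstacle—is justifying the change of variables near the endpoint $x=\K(p)$, where $dz/dx=\cn\,\dn$ vanishes and the integrand on the $z$-side becomes singular at $z=1$. This is handled by viewing the $z$-integral as a convergent improper integral (the singularity at $z=1$ is integrable), or equivalently by performing the substitution on $[0,\K(p)-\delta]$ and letting $\delta\downarrow0$, the monotonicity of $\sn$ on $[0,\K(p)]$ guaranteeing that $x\mapsto z$ is a bijection with the stated endpoint values.
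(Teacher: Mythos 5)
Your proof is correct and takes essentially the same route as the paper: the paper substitutes $\xi=\cn(x,p)$ and then $\zeta=\sqrt{1-\xi^2}$, whose composition is exactly your single substitution $z=\sn(x,p)$, and both arguments reduce the left-hand side to $\int_0^1\sqrt{1-z^2}/\sqrt{1-p^2z^2}\,dz$ before expressing that integral through the defining formulas of $\K(p)$ and $\E(p)$. The only cosmetic differences are that you invoke the derivative formula for $\sn$ instead of \eqref{eq:A.1} and arrange the final algebraic split slightly differently, which is immaterial.
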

\begin{proof}
Putting $\cn(x,p)=\xi$ in the left hand side of \eqref{eq:2.12}, we infer from \eqref{eq:A.1} that 
\begin{align*}
\int_0^{\K(p)} \cn(x,p)^2 \,dx = \int_1^0 \xi^2 \frac{(-1)}{\sqrt{1-\xi^2}\sqrt{1-p^2(1-\xi^2)}} \,d\xi ,
\end{align*}
where we used $\sn=\sqrt{1-\cn^2}$, $\dn=\sqrt{1-p^2\sn^2}$ in $(0,\K(p))$.
Then by the change of variable $\sqrt{1-\xi^2}=\zeta$ we have 
\begin{align*}
\int_0^{\K(p)} \cn(x,p)^2 \,dx &= \int_0^1 \frac{\sqrt{1-\zeta^2}}{\sqrt{1-p^2\zeta^2}} \,d\zeta  \\
&= \left(1-\frac{1}{p^2}\right) \int_0^1 \frac{1}{\sqrt{1-\zeta^2}\sqrt{1-p^2\zeta^2}} \,d\zeta 
+\frac{1}{p^2} \int_0^1 \frac{\sqrt{1-p^2\zeta^2}}{\sqrt{1-\zeta^2}} \,d\zeta,
\end{align*}
which with the definitions of $\E(p)$ and $\K(p)$ yields \eqref{eq:2.12}.
\end{proof}

By Proposition~\ref{prop:A.2}, we obtain the \Erase{information}\Add{derivative} of the right hand side of \eqref{eq:2.12} as follows. 
Since the straightforward calculation yields Lemma~\ref{lem:2.3}, we omit the proof.
\begin{lemma}\label{lem:2.3}
\begin{align*}
\frac{d}{dp} \Big( p^2\K(p)-\K(p) +\E(p)  \Big) = \Erase{\frac{\K(p)}{p}} \add{p \K(p)} >0 
\quad \text{for} \quad p\in(0,1).
\end{align*}
\end{lemma}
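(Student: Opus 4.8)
The plan is to differentiate directly, using only the derivative formulas recorded in Proposition~\ref{prop:A.2}. First I would rewrite the expression in the compact form
\[
f(p) := p^2\K(p)-\K(p)+\E(p) = (p^2-1)\K(p)+\E(p),
\]
which groups the two $\K$-terms so that the factor $(p^2-1)$ can later cancel against the denominator $(1-p^2)$ appearing in $\K'(p)$. Differentiating by the product rule gives
\[
f'(p) = 2p\,\K(p) + (p^2-1)\K'(p) + \E'(p),
\]
and the whole task reduces to simplifying the last two terms.

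Next I would substitute the expressions
\[
\K'(p)= \frac{\E(p)}{(1-p^2)p} - \frac{\K(p)}{p}, \qquad
\E'(p)= \frac{\E(p)}{p} - \frac{\K(p)}{p}
\]
from Proposition~\ref{prop:A.2}. Writing $p^2-1=-(1-p^2)$, the term $(p^2-1)\cdot\tfrac{\E(p)}{(1-p^2)p}$ collapses to $-\tfrac{\E(p)}{p}$, while $(p^2-1)\cdot\bigl(-\tfrac{\K(p)}{p}\bigr)$ becomes $\tfrac{\K(p)}{p}-p\,\K(p)$. Adding $\E'(p)$ then cancels both the $\E(p)/p$ and the $\K(p)/p$ contributions, leaving $(p^2-1)\K'(p)+\E'(p)=-p\,\K(p)$. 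Combining this with the $2p\,\K(p)$ from the product rule yields $f'(p)=p\,\K(p)$, as claimed.

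Finally, for the strict inequality I would invoke that $\K(p)>0$ on $(0,1)$; this is immediate since $\K$ is increasing with $\K(0)=\pi/2>0$ (as used in Lemma~\ref{lem:2.1}), so $p\,\K(p)>0$ for $p\in(0,1)$. The computation is entirely routine, so there is no genuine obstacle; the only point requiring care is the algebraic cancellation in the previous step, and in particular keeping the sign of $(p^2-1)=-(1-p^2)$ straight when it meets the $(1-p^2)$ in the denominator of $\K'(p)$. This is precisely why I would prefer the grouped form $(p^2-1)\K(p)+\E(p)$ from the outset, so that the cancellation is visible rather than obscured.
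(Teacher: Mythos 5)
Your computation is correct and is exactly the ``straightforward calculation'' the paper has in mind: the paper omits the proof of Lemma~\ref{lem:2.3}, indicating only that it follows from the derivative formulas in Proposition~\ref{prop:A.2}, which is precisely what you carry out (and your algebra, giving $(p^2-1)\K'(p)+\E'(p)=-p\,\K(p)$ and hence $f'(p)=p\,\K(p)>0$, checks out). Nothing further is needed.
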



\section{Representation formulas} \label{section:3}
In this section we shall deduce the representation formulas for critical curves. 
To this end, we consider two cases in subsections~\ref{subsect:3.1} and \ref{subsect:3.2} respectively 
and then Theorem~\ref{thm:1.1} is shown.
Next we discuss about formulas of elastic energy in subsection~\ref{subsect:3.3} and then Theorem~\ref{thm:1.2} is shown.

By the argument in subsection~\ref{subsect:2.1}, critical curves satisfy \eqref{eq:EL} and \eqref{eq:2.10} so we focus on them hereafter.
In order to solve the boundary value problem \eqref{eq:EL}, we employ the shooting method.
\add{Let $\vc \in \pinA$ be a critical point of $\W$. 
It follows from \eqref{eq:EL} that the curvature $\vk$ of $\vc$ satisfies 
\begin{align}\label{eq:ini}
\begin{cases}
\pd_s^2\vk+\dfrac{1}{2}\vk^3-\dfrac{\lm}{2}\kappa = 0, \\
\vk(0)=0, \\
\vk'(0)=b, 
\end{cases}
\end{align}
for some $b, \lm\in \R$.
Let  $\vk_{b, \lm}$ be the unique solution of \eqref{eq:ini} for each $b, \lm\in \R$.
Using $\vk:=\vk_{b, \lm}$ in \eqref{eq:2.10}, we can deduce that $\tilde{\vc}=(X(s), Y(s))$, the arc length parameterization of $\vc$, is written by $(X(s), Y(s))=(X_{b,\lm,c}(s), Y_{b,\lm,c}(s))$, where
\begin{align} \label{eq:0512-2}
\begin{split}
X_{b,\lm,c}(s):= \frac{1}{c}\left( \int_0^s \vk_{b, \lm}(t)^2\,dt -\lm s  \right), \ \ 
Y_{b,\lm,c}(s):=-\frac{2}{c} \vk_{b, \lm}(s),
\end{split}
\end{align}
and $c\in\R\setminus\{0\}$ is a constant satisfying 
\begin{align} \label{eq:0512-3}
\frac{1}{c^2}\lm^2 + \frac{4}{c^2}b^2=1.
\end{align} 
Equation \eqref{eq:0512-3} was deduced from $| \tilde{\vc}'(s)|  \equiv 1$ and \eqref{eq:ini}.
Here we note that \eqref{eq:0512-2} implies $Y_{b,\lm,c}(L)=0$ is equivalent to $\vk_{b,\lm}(L)=0$.
Set
\[
c^+:= \sqrt{\lm^2+4b^2}, \quad c^-:= -\sqrt{\lm^2+4b^2}
\]
and define 
\[
\hat{X}_{b,\lm}:=X_{b,\lm, c^+}, \quad \hat{Y}_{b,\lm}:=Y_{b,\lm, c^+}, \quad
\check{X}_{b,\lm}:=X_{b,\lm, c^-}, \quad \check{Y}_{b,\lm}:=Y_{b,\lm, c^-}.
\]
Thus it suffices to find $b\in\R$ and $\lm \in \R$ satisfying 
\begin{align} \label{eq:0513-1}
 \hat{X}_{b,\lm}(L) = l, \quad \vk_{b, \lm}(L)=0 
\end{align}
or 
\begin{align} \label{eq:0513-2}
 \check{X}_{b,\lm}(L)=l, \quad \vk_{b, \lm}(L)=0. 
\end{align}
}
According to \cite{Lin96}, $\vk_{b, \lm}$ is given by
\begin{align*}
\vk_{b, \lm}(s) = A\,\cn(\va s+\vb, p), 
\end{align*}
where $A\geq0$, $\va\geq0$, $p\in[0,1]$ and $\vb\in[-\K(p),3\K(p))$ are given by
\begin{align}
& A\,\cn(\vb, p)=0,  \label{eq:3.13} \\
-&A\va\,\sn(\vb, p)\dn(\vb,p)=b,  \label{eq:3.14} \\
 &A^2=4\va^2p^2,  \label{eq:3.15} \\
-&\frac{\lm}{2}=\va^2(1-2p^2). \label{eq:3.16}
\end{align}

\add{
We split the proof of Theorem \ref{thm:1.1} into two subsections.
First, we solve \eqref{eq:0513-1} and then obtain
the representation formulas for $\hat{\vc}^{\pm}_n$. 
On the other hand, solving \eqref{eq:0513-2} is equivalent to deriving the representation formulas for $\check{\vc}^{\pm}_n$.
The difference between $c^+$ and $c^-$ drastically changes the equation on $p$ (see \eqref{eq:3.22} and \eqref{eq:3.222}), and is reflected to the feature of shapes of $\hat{\vc}^{\pm}_n$ and $\check{\vc}^{\pm}_n$  (see Figure~\ref{fig:3}).
We mention the parameter $b$. 
By the uniqueness of solutions to \eqref{eq:ini} we see the following:
\begin{align}
& b=0 \ \ \text{implies} \ \ \vk \equiv 0; \notag \\
& \vk_{b, \lm} = -\vk_{-b, \lm} \quad \text{for any} \quad (b,\lm)\in \R^2. \label{eq:0309-1}
\end{align}
If $b=0$, then any critical curve is only the line segment, which does not satisfy $l<L$.
Therefore hereafter we eliminate the case $b=0$.
Combining \eqref{eq:2.10} with \eqref{eq:0309-1}, we have
\begin{align*} 
(\hat{X}_{-b,\lm}, \hat{Y}_{-b,\lm}) = (\hat{X}_{b,\lm}, -\hat{Y}_{b,\lm}), \quad 
(\check{X}_{-b,\lm}, \check{Y}_{-b,\lm}) = (\check{X}_{b,\lm}, -\check{Y}_{b,\lm})
\end{align*}
Therefore it suffices to consider either $b>0$ or $b<0$.
}

\subsection{
\add{Solutions to \eqref{eq:0513-1}}
} \label{subsect:3.1} 
\add{
In this subsection we first find a solution $(b, \lm) \in (-\infty,0)\times \R$ of \eqref{eq:0513-1}.}
Recalling \eqref{eq:2.11},  
we find that $\vb$ in \eqref{eq:3.13} is either 
\[ \vb = -\K(p) \quad \text{or} \quad \vb=\K(p). \] 
Then since $\sn(\pm\K(p), p)=\pm1$ and $\dn(\pm\K(p), p)=\sqrt{1-p^2}$, 
\Add{by \eqref{eq:3.14} we have
\begin{align} \label{eq:3.17}
\vb = \K(p) \quad \text{if} \quad b<0.
\end{align}
}
Moreover, it follows from \eqref{eq:3.14}--\eqref{eq:3.16} and \eqref{eq:3.17} that $A$, $\va$ and $p$ satisfy 
\begin{align} \label{eq:3.19}
\va^2 = \frac{1}{2} \sqrt{4b^2 + \lm^2}, \quad 
p^2 = \frac{1}{2}+\frac{\lm}{4\va^2}, \quad
A^2= \lm + 2\va^2. 
\end{align}

First, we focus on the condition $\vk_{b,\lm}(L)=0$. 
By \eqref{eq:2.11}, $\vk_{b,\lm}(L)= A\cn(\va L+\K(p), p)=0$ holds if and only if
\begin{align} \label{eq:3.18}
\va L +\K(p) = (2n +3)\K(p) \quad \text{for some} \quad n\in \N\cup\{0\}.
\end{align}
\add{Recall that $\hat{X}_{b,\lm}$ is obtained by replacing $c$ with $c^+=\sqrt{\lm^2+4b^2}$ in \eqref{eq:0512-2}, that is, }
\begin{align}
\add{\hat{X}_{b,\lm}}(s)&=\frac{1}{\sqrt{\lm^2+4b^2}}\left( \int_0^s \vk_{b,\lm}(\xi)^2 \,d\xi -\lm s  \right) .
\label{eq:XY} 
\end{align}
\add{Therefore} the remaining condition $\add{\hat{X}_{b,\lm}}(L)=l$ holds if and only if 
\begin{align}\label{eq:3.21}
\frac{1}{2\va^2}\left( \int_0^L \vk_{b,\lm}(s)^2\,ds -\lm L  \right) = l, 
\end{align}
where we used \eqref{eq:XY} and the relation $2\va^2 = \sqrt{\lm^2+4b^2}$ in \eqref{eq:3.19}.
The integral in \eqref{eq:3.21} is reduced to
\begin{align*}
\int_0^L \vk_{b,\lm}(s)^2 \,ds &=  \int_0^L A^2\cn(\va s +\K(p), p)^2\,ds  \\
 &=  \int_{\K(p)}^{\va L+\K(p)} \frac{A^2}{\va}\cn(\zeta , p)^2\,d\zeta \\
\! &\overset{\eqref{eq:3.18}}{=}   \frac{A^2}{\va}\int_{\K(p)}^{(2n+3)\K(p)}\cn(\zeta , p)^2\,d\zeta \\
 &= \frac{ (2n+2) A^2}{\va}  \int_0^{\K(p)} \cn(\zeta , p)^2\,d\zeta,
\end{align*}
where we used the periodicity of $\cn$ in the last equality.
Since it follows from Lemma~\ref{lem:2.2}, \eqref{eq:3.15}, and \eqref{eq:3.18} that 
\begin{align} \label{eq:3.20}
\begin{split}
\int_0^L \vk_{b,\lm}(s)^2 \,ds &= 8(n+1)\va p^2  \cdot \frac{p^2 \K(p)-\K(p) + \E(p)}{p^2} \\
&= \add{ \frac{16(n+1)^2 \K(p)}{L} \Big( p^2 \K(p)-\K(p) + \E(p) \Big) }.
\end{split}
\end{align}
Therefore we can rewrite \eqref{eq:3.21} into
\[ 
\frac{1}{2\va^2}\left( \add{\frac{16(n+1)^2 \K(p)}{L} \Big( p^2 \K(p)-\K(p) + \E(p) \Big)} -\lm L \right) =l, 
\]
which in combination with \eqref{eq:3.19} and  \eqref{eq:3.18} gives
\[
\frac{2L}{\K(p)}\Big(p^2 \K(p)-\K(p) + \E(p) \Big) -(2p^2-1) L  =l. 
\]
Therefore $p\in [0,1]$ must satisfy
\begin{align} \label{eq:3.22}
2\cdot\frac{\E(p)}{\K(p)} -1  = \frac{l}{L}.
\end{align}
Lemma~\ref{lem:2.1} implies that such $p$ is uniquely determined and 
let $\hat{p}$ denote the solution of \eqref{eq:3.22}.
Then plugging $\hat{p}$ into \eqref{eq:3.19} and $\va L=2(n+1)\K(p)$, we notice that $b$ and $\lm$ satisfying \add{\eqref{eq:0513-1}} are given by $(b, \lm)=(\hat{b}_n, \hat{\lm}_n)$, where
\begin{align}\label{eq:1005-1}
\hat{b}_n:=-\frac{\Add{8(n+1)^2}}{L^2}\K(\hat{p})^2\hat{p} \sqrt{1-\hat{p}^2} ,  \quad
\hat{\lm}_n:=-\frac{\Add{8(n+1)^2}}{L^2}\K(\hat{p})^2 \big(1-2\hat{p}^2 \big) , 
\end{align}
for $n\in\N\cup\{0\}$.

\add{
Let us turn to the case $b>0$. 
Recalling \eqref{eq:0309-1}, we have $(b,\lm)=(-\hat{b}_n, \hat{\lm}_n)$ for each $n\in \N$.
Therefore we can summarize the above arguments as follows:
\begin{theorem} \label{thm:3.1}
The pair $(b,\lm)\in\R^2$ solves \eqref{eq:0513-1} if and only if
\begin{align*} 
(b,\lm) = (\hat{b}_n, \hat{\lm}_n) \ \ \text{or} \ \ (-\hat{b}_n, \hat{\lm}_n) \quad \text{for some} \quad n\in \N\cup\{0\}. 
\end{align*}
\end{theorem}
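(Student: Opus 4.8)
The plan is to package the preceding computation into a chain of \emph{equivalences}, handle the cases $b<0$ and $b>0$ separately, and link them through the reflection symmetry \eqref{eq:0309-1}. First I would dispose of the degenerate case: $b=0$ forces $\vk\equiv0$, hence a straight segment with $l=L$, contradicting $l<L$; so every solution of \eqref{eq:0513-1} has $b\neq0$, which also guarantees $A>0$ and $\va>0$ so that the formulas \eqref{eq:3.19} are meaningful.

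For $b<0$ I would show that \eqref{eq:0513-1} is equivalent to the scalar equation \eqref{eq:3.22} together with the quantization \eqref{eq:3.18}. The phase is pinned to $\vb=\K(p)$ by \eqref{eq:3.13}--\eqref{eq:3.14} and the sign of $b$ (since $\sn(\K(p),p)=1>0$ while $\sn(-\K(p),p)=-1$); the endpoint condition $\vk_{b,\lm}(L)=0$ is equivalent, via the zero set \eqref{eq:2.11} of $\cn$, to $\va L=2(n+1)\K(p)$ for some $n\in\N\cup\{0\}$; and $\hat X_{b,\lm}(L)=l$ reduces, after evaluating $\int_0^L\vk_{b,\lm}^2\,ds$ by Lemma \ref{lem:2.2} and the period quantization, to \eqref{eq:3.22}. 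Because each of these steps is reversible, a pair $(b,\lm)$ with $b<0$ solves \eqref{eq:0513-1} if and only if its modulus $p$ solves \eqref{eq:3.22} and $\va L=2(n+1)\K(p)$ for some $n$.

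Uniqueness then follows from Lemma \ref{lem:2.1}: the map $p\mapsto 2\E(p)/\K(p)-1$ is strictly decreasing from $1$ at $p=0$ to $-1$ as $p\uparrow1$, and $l/L\in(0,1)$, so \eqref{eq:3.22} has a unique root $\hat p$. With $p=\hat p$ fixed, \eqref{eq:3.18} fixes $\va$ for each $n$, and \eqref{eq:3.19} then recovers $b$ and $\lm$; substituting $\va L=2(n+1)\K(\hat p)$ into \eqref{eq:3.19} produces exactly \eqref{eq:1005-1}, so the solutions with $b<0$ are precisely $(\hat b_n,\hat\lm_n)$. For $b>0$ I would invoke \eqref{eq:0309-1}: from $\vk_{b,\lm}=-\vk_{-b,\lm}$ we get $\vk_{b,\lm}^2=\vk_{-b,\lm}^2$ and $\sqrt{\lm^2+4b^2}=\sqrt{\lm^2+4(-b)^2}$, whence $\hat X_{b,\lm}=\hat X_{-b,\lm}$ and $\vk_{b,\lm}(L)=0\iff\vk_{-b,\lm}(L)=0$; thus $(b,\lm)$ with $b>0$ solves \eqref{eq:0513-1} exactly when $(-b,\lm)$ does, giving $(b,\lm)=(-\hat b_n,\hat\lm_n)$.

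I expect the delicate point to be bookkeeping rather than analysis: ensuring each reduction is a genuine equivalence so that the converse inclusion (each listed pair really solves \eqref{eq:0513-1}) is automatic, and correctly pinning $\vb=\K(p)$ from the sign of $b$. The analytic core --- the closed-form integral $\int_0^L\vk_{b,\lm}^2\,ds$ and the monotonicity giving uniqueness of $\hat p$ --- is already furnished by Lemmas \ref{lem:2.1} and \ref{lem:2.2}.
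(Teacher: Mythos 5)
Your proposal is correct and follows essentially the same route as the paper: eliminate $b=0$, pin the phase $\vb=\K(p)$ from the sign of $b$ via \eqref{eq:3.13}--\eqref{eq:3.14}, reduce $\vk_{b,\lm}(L)=0$ and $\hat{X}_{b,\lm}(L)=l$ to the quantization \eqref{eq:3.18} and the scalar equation \eqref{eq:3.22}, invoke Lemma \ref{lem:2.1} for the unique root $\hat{p}$, recover \eqref{eq:1005-1} from \eqref{eq:3.19}, and transfer to $b>0$ by the reflection symmetry \eqref{eq:0309-1}. Your explicit insistence that each reduction is an equivalence is exactly how the paper's derivation (stated as a chain of ``if and only if'' steps) justifies the two-sided conclusion of Theorem \ref{thm:3.1}.
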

}

\subsection{
\add{Solutions to \eqref{eq:0513-2}}
}  \label{subsect:3.2} 

\add{
In this subsection we first find a solution $(b, \lm) \in (0,\infty)\times \R$ of \eqref{eq:0513-2}.}
\add{Along the same line as in \eqref{eq:3.17},}
\Add{it holds that
\begin{align*} 
\vb = -\K(p) \quad \text{if}\quad b>0, 
\end{align*}
}
and $A$, $\va$ and $p$ in \eqref{eq:3.14}--\eqref{eq:3.16} need to satisfy \eqref{eq:3.19}.
Then \Erase{similarly to}\Add{by the same argument as in} \eqref{eq:3.18}, \add{$\vk_{b,\lm}(L)=0$} holds if and only if  
\begin{align*}
\va L -\K(p) = (2n+1) \K(p) \quad \text{for some} \quad n \in\N\cup\{0\}.
\end{align*}
\Erase{Similarly}\Add{Similar} to \eqref{eq:3.20}, we obtain
\begin{align} \label{eq:3.25}
\begin{split}
\int_0^L \vk_{b,\lm}(s)^2 \,ds 
&= \add{ \frac{16(n+1)^2 \K(p)}{L}  \Big( p^2 \K(p)-\K(p) + \E(p) \Big) }
\end{split}
\end{align}
and hence the necessary and sufficient condition for \add{$\check{X}_{b,\lm}(L)=l$} is that 
\[  -\frac{1}{2\va^2}\left(
\add{ \frac{16(n+1)^2 \K(p)}{L}  \Big( p^2 \K(p)-\K(p) + \E(p) \Big) }
-\lm L \right) =l.
\]
Therefore $p\in[0,1]$ needs to satisfy
\begin{align} \label{eq:3.222}
-2\cdot\frac{\E(p)}{\K(p)} +1  = \frac{l}{L}.
\end{align}
Lemma~\ref{lem:2.1} implies that such a \add{constant} $p$ is uniquely determined and hence we denote \Erase{$\check{p}$ by}\Add{by $\check{p}$} the solution of \eqref{eq:3.222}.
Hence plugging $\check{p}$ into \eqref{eq:3.19} and $\va L=2(n+1)\K(p)$, we notice that $b$ and $\lm$ satisfying \add{\eqref{eq:0513-2}} are given by $(b, \lm)=(\check{b}_n, \check{\lm}_n)$, where
\begin{align}\label{eq:0513-4}
\check{b}_n:=\frac{\Add{8(n+1)^2}}{L^2}\K(\check{p})^2\check{p} \sqrt{1-\check{p}^2} ,  \quad
\check{\lm}_n:=-\frac{\Add{8(n+1)^2}}{L^2}\K(\check{p})^2 \big(1-2\check{p}^2 \big) , 
\end{align}
for $n\in\N\cup\{0\}$.
\add{By considering the case of $b<0$ as well, we obtain the following:
\begin{theorem} \label{thm:3.2}
The pair $(b,\lm)\in\R^2$ solves \eqref{eq:0513-2} if and only if
\begin{align*} 
(b,\lm) = (\check{b}_n, \check{\lm}_n) \ \ \text{or} \ \ (-\check{b}_n, \check{\lm}_n) \quad \text{for some} \quad n\in \N\cup\{0\}. 
\end{align*}
\end{theorem}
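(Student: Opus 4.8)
The plan is to read Theorem~\ref{thm:3.2} as the exact counterpart of Theorem~\ref{thm:3.1}: the derivation carried out above in this subsection already establishes the equivalence for one sign of $b$, and the reflection symmetry \eqref{eq:0309-1} transfers it to the other sign. Since $b=0$ forces $\vk\equiv0$ and hence a line segment (incompatible with $l<L$), only the two cases $b>0$ and $b<0$ need to be treated, and it suffices to classify one of them directly.

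For the \emph{only if} direction I would fix a solution $(b,\lm)$ of \eqref{eq:0513-2} and first suppose $b>0$. The sign condition \eqref{eq:3.14} then forces $\vb=-\K(p)$, so that $\vk_{b,\lm}(s)=A\cn(\va s-\K(p),p)$ with $A$, $\va$, $p$ constrained by \eqref{eq:3.19}. The key point is that each of the two conditions in \eqref{eq:0513-2} is turned into an \emph{equivalent} scalar condition: $\vk_{b,\lm}(L)=0$ is equivalent, via \eqref{eq:2.11}, to the quantization $\va L=2(n+1)\K(p)$ for some $n\in\N\cup\{0\}$, and then, after substituting \eqref{eq:3.25} into $\check{X}_{b,\lm}(L)=l$, the remaining condition reduces exactly to the modulus equation \eqref{eq:3.222}. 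By Lemma~\ref{lem:2.1} the left-hand side $-2\E(p)/\K(p)+1$ is strictly increasing and sweeps the interval $(-1,1)$, so for $0<l/L<1$ there is a \emph{unique} root $p=\check{p}$. Feeding $\check{p}$ and $\va L=2(n+1)\K(\check{p})$ back into \eqref{eq:3.19} pins down $(b,\lm)=(\check{b}_n,\check{\lm}_n)$ as in \eqref{eq:0513-4}.

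For the \emph{if} direction and the remaining sign I would argue as follows. Reading the above chain backwards shows that each $(\check{b}_n,\check{\lm}_n)$ indeed satisfies \eqref{eq:0513-2}. To obtain the reflected family I invoke \eqref{eq:0309-1}: since $\vk_{-b,\lm}=-\vk_{b,\lm}$, the boundary value $\vk_{-\check{b}_n,\check{\lm}_n}(L)=-\vk_{\check{b}_n,\check{\lm}_n}(L)=0$ is unchanged, and because $\check{X}_{b,\lm}$ depends on $\vk_{b,\lm}$ only through $\vk_{b,\lm}^2$ (see \eqref{eq:0512-2}), one has $\check{X}_{-\check{b}_n,\check{\lm}_n}=\check{X}_{\check{b}_n,\check{\lm}_n}$, whence $(-\check{b}_n,\check{\lm}_n)$ solves \eqref{eq:0513-2} as well. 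Conversely, any solution with $b<0$ reflects under \eqref{eq:0309-1} to a solution with $b>0$, already classified in the previous paragraph, so it must be of the form $(-\check{b}_n,\check{\lm}_n)$.

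The computations themselves are routine once \eqref{eq:3.25} is in hand, so the only real care is bookkeeping: I must make sure every reduction in the \emph{only if} step is a genuine equivalence (so that no solution is lost and none is spuriously added), that the sign of $\vb$ is read off correctly from \eqref{eq:3.14} for each sign of $b$, and that Lemma~\ref{lem:2.1} is applied so as to guarantee both existence and uniqueness of $\check{p}$. The one structural difference from Theorem~\ref{thm:3.1}, and the place I would double-check, is that the choice $c=c^-$ flips the sign in front of the integral, which is precisely what turns \eqref{eq:3.22} into \eqref{eq:3.222}; getting this sign right is what distinguishes the families $\check{\vc}_n^{\pm}$ from $\hat{\vc}_n^{\pm}$.
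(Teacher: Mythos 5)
Your proposal is correct and takes essentially the same route as the paper: for $b>0$ it reads off $\vb=-\K(p)$ from \eqref{eq:3.14}, reduces \eqref{eq:0513-2} to the quantization $\va L=2(n+1)\K(p)$ plus the modulus equation \eqref{eq:3.222}, gets the unique root $\check{p}$ from Lemma~\ref{lem:2.1}, recovers $(\check{b}_n,\check{\lm}_n)$ via \eqref{eq:3.19}, and handles $b<0$ by the reflection \eqref{eq:0309-1}. The paper's own argument is the same equivalence chain, stated more tersely (``by considering the case of $b<0$ as well''), so your write-up simply makes explicit the bookkeeping the paper leaves implicit.
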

\subsection{Characterization of critical points} \label{subsect:3.4} 
To begin with, we shall derive the representation formulas of critical curves.
Set
\begin{align*} 
\hat{\vk}_n&:= \vk_{\hat{b}_n, \hat{\lm}_n}, \quad 
\hat{X}_n:=\hat{X}_{\hat{b}_n, \hat{\lm}_n}, \quad 
\hat{Y}_n:=\hat{Y}_{\hat{b}_n, \hat{\lm}_n},  \\
\check{\vk}_n&:= \vk_{\check{b}_n, \check{\lm}_n}, \quad 
\check{X}_n:=\check{X}_{\check{b}_n, \check{\lm}_n},\quad  
\check{Y}_n:= \check{Y}_{\check{b}_n, \check{\lm}_n}.
\end{align*}
\begin{proof}[Proof of Theorem~\ref{thm:1.1}]
Let $\vc \in \pinA$ and $\tilde{\vc}$ be its arc length parameterization.
By the previous arguments, $\vc \in \pinA$ is a critical curve if and only if 
\[
\hat{\vc}^{+}_n \ \text{or} \ \hat{\vc}^{-}_n \ \text{or} \ \check{\vc}^{+}_n \ \text{or} \ \check{\vc}^{-}_n 
\quad \text{for some} \ \ n\in\N\cup\{0\},
\]
where $\hat{\vc}^{\pm}_n(s):=(\hat{X}_n(s), \pm\hat{Y}_n(s))$ and $\check{\vc}^{\pm}_n(s):=(\check{X}_n(s), \pm\check{Y}_n(s))$.
Moreover, it follows from \eqref{eq:1005-1} and \eqref{eq:0513-4} that 
\begin{align*}
 \hat{\vk}_n(s)&= \frac{4(n+1)}{L}\hat{p}\K(\hat{p})\, \cn\left(\frac{2(n+1)}{L}\K(\hat{p}) s+\K(\hat{p}), \hat{p}\right), \\
 \hat{X}_n(s)&= 
2 \hat{p}^2 \int_0^s \cn\bigg( \frac{2(n+1)\K(\hat{p})}{L} t +\K(\hat{p}), \hat{p} \bigg)^2\,dt + \big(1-2\hat{p}^2 \big) s,   \\
\hat{Y}_n(s)&= -\frac{ \hat{p} L}{(n+1)\K(\hat{p})}
 \cn\bigg( \frac{2(n+1)\K(\hat{p})}{L} s+\K(\hat{p}), \hat{p} \bigg),
\end{align*}
for $s\in[0,L]$ and 
\begin{align*}
\check{\vk}_n(s)&= \frac{4(n+1)}{L}\check{p}\K(\check{p})\, \cn\left(\frac{2(n+1)}{L}\K(\check{p}) s-\K(\check{p}), \check{p}\right), \\
\check{X}_n(s)&= 
-2 \check{p}^2 \int_0^s \cn\bigg( \frac{2(n+1)\K(\check{p})}{L} t -\K(\check{p}), \check{p} \bigg)^2\,dt - \big(1-2\check{p}^2 \big) s,   \\
\check{Y}_n(s)&= \frac{ \check{p} L}{(n+1)\K(\check{p})}
 \cn\Big( \frac{2(n+1)\K(\check{p})}{L} s-\K(\check{p}), \check{p} \Big), 
\end{align*}
for $s\in[0,L]$.
Here, $\hat{p}$ (resp. $\check{p}$) is the unique solution of \eqref{eq:3.22} (resp.  \eqref{eq:3.222}).
The proof is now complete.
\end{proof}
}

Thanks to this representation formula, we can identify what these critical curves are.
\add{To begin with, 
from the periodicity of critical curves $\hat{\vc}_n^{\pm}$ and $\check{\vc}_n^{\pm}$,
one may deduce that $\hat{\vc}_n^{\pm}$ and $\check{\vc}_n^{\pm}$ with $n\in \N \cup\{0\}$ can be constructed from $\hat{\vc}_0^{\pm}$ and $\check{\vc}_0^{\pm}$ respectively.
Indeed we have:
\begin{lemma}\label{lem:5.16}
Let $n\in \N \cup\{0\}$ and $m = 0, 1, \ldots, n$ be arbitrary.
Then 
\begin{align*}
&X_n \big(s+\tfrac{m}{n+1}L\big)= \frac{1}{n+1} X_0\big( (n+1)s \big) + \frac{m}{n+1}l, \\
&Y_n \big(s+\tfrac{m}{n+1}L\big)= (-1)^{m} \frac{1}{n+1} Y_0\big( (n+1)s \big), 
\end{align*}
for $s\in[0, L/(n+1)]$.
Here $(X_n, Y_n)$ is either $(\hat{X}_n, \hat{Y}_n)$ or $(\check{X}_n, \check{Y}_n)$.
\end{lemma}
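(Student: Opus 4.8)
The plan is to reduce everything to two structural features of the formulas obtained in the proof of Theorem~\ref{thm:1.1}. First, the modulus $\hat{p}$ (resp.\ $\check{p}$) is fixed by \eqref{eq:3.22} (resp.\ \eqref{eq:3.222}) and is therefore \emph{independent of} $n$; this independence is what makes a comparison of $X_n,Y_n$ with $X_0,Y_0$ meaningful at all. Second, the argument $\frac{2(n+1)\K(\hat{p})}{L}t+\K(\hat{p})$ of the Jacobi function increases by exactly $2\K(\hat{p})$ when $t$ increases by $L/(n+1)$; since $\cn(\cdot,\hat{p})$ is $2\K(\hat{p})$-antiperiodic, the integrand $\cn(\cdot,\hat{p})^2$ is $\big(L/(n+1)\big)$-periodic in $t$, while $\cn(\cdot,\hat{p})$ itself acquires a factor $(-1)^m$ under the shift $t\mapsto t+\tfrac{m}{n+1}L$. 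I would carry out the argument for $(\hat{X}_n,\hat{Y}_n)$; the case $(\check{X}_n,\check{Y}_n)$ is identical up to the sign changes recorded below.

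First I would settle the case $m=0$. In the integral defining $\hat{X}_0\big((n+1)s\big)$ I substitute $t=(n+1)\tau$; this rescales the $\cn$-argument to $\frac{2(n+1)\K(\hat{p})}{L}\tau+\K(\hat{p})$ and pulls out a factor $n+1$, so dividing by $n+1$ returns exactly $\hat{X}_n(s)$. For the second coordinate the $\cn$-argument in $\hat{Y}_0\big((n+1)s\big)$ coincides verbatim with the one in $\hat{Y}_n(s)$, while the prefactors differ precisely by $n+1$; hence $\hat{Y}_n(s)=\tfrac{1}{n+1}\hat{Y}_0\big((n+1)s\big)$. Thus for $m=0$ both identities hold, and it remains to establish
\[
\hat{X}_n\big(s+\tfrac{m}{n+1}L\big)=\hat{X}_n(s)+\tfrac{m}{n+1}l,\qquad
\hat{Y}_n\big(s+\tfrac{m}{n+1}L\big)=(-1)^m\,\hat{Y}_n(s).
\]

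For $\hat{Y}_n$ this is immediate: the shift adds $2m\K(\hat{p})$ to the argument of $\cn$, and $2\K(\hat{p})$-antiperiodicity yields the factor $(-1)^m$. For $\hat{X}_n$ I would split the defining integral at $s$; by the $\big(L/(n+1)\big)$-periodicity of the integrand the tail over $[s,s+\tfrac{m}{n+1}L]$ equals $m$ times the integral over one period $[0,L/(n+1)]$, which under the substitution $\zeta=\frac{2(n+1)\K(\hat{p})}{L}t+\K(\hat{p})$ and Lemma~\ref{lem:2.2} equals $\frac{L}{(n+1)\K(\hat{p})}\cdot\frac{\hat{p}^2\K(\hat{p})-\K(\hat{p})+\E(\hat{p})}{\hat{p}^2}$. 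Collecting this with the contribution $(1-2\hat{p}^2)\tfrac{m}{n+1}L$ from the linear term, the increment of $\hat{X}_n$ reduces to $\frac{mL}{n+1}\big(\frac{2\E(\hat{p})}{\K(\hat{p})}-1\big)$. The decisive step is that the defining equation \eqref{eq:3.22} turns this bracket into $l/L$, producing exactly $\tfrac{m}{n+1}l$. This is the only point where the specific value of $\hat{p}$ is used, and it is the heart of the computation; in the check case the same bracket appears with an overall sign, which is cancelled precisely by the opposite sign in \eqref{eq:3.222}, so both cases close. Combining the $m=0$ identities with these two increment formulas gives the claim.
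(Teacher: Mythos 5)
Your proof is correct, and all the computations check out: the rescaling $t=(n+1)\tau$ for the $m=0$ case, the $(-1)^m$ from $2\K$-antiperiodicity, the period integral $\frac{L}{(n+1)\K(\hat{p})}\cdot\frac{\hat{p}^2\K(\hat{p})-\K(\hat{p})+\E(\hat{p})}{\hat{p}^2}$ via Lemma~\ref{lem:2.2}, and the sign cancellation between the check-case formulas and \eqref{eq:3.222}. However, your route differs from the paper's at the decisive step. The paper splits the integral defining $\hat{X}_n\big(s+\tfrac{m}{n+1}L\big)$ at $\tfrac{m}{n+1}L$ rather than at $s$, uses periodicity and the substitution $u=(n+1)t$ to recognize the two pieces as $\frac{m}{n+1}\hat{X}_0(L)$ and $\frac{1}{n+1}\hat{X}_0\big((n+1)s\big)$, and then simply invokes $\hat{X}_0(L)=l$ --- a fact already guaranteed by the construction in Section~\ref{subsect:3.1}, since $(\hat{b}_0,\hat{\lm}_0)$ was chosen to solve \eqref{eq:0513-1}. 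No elliptic integrals are evaluated, and the check case is then literally verbatim (using $\check{X}_0(L)=l$), which is why the paper can dismiss it in one line. You instead factor the lemma into a scaling identity ($m=0$) plus a translation-increment identity for $X_n$ itself, and you evaluate the increment from scratch via Lemma~\ref{lem:2.2} and the defining equations \eqref{eq:3.22}, \eqref{eq:3.222} of $\hat{p}$, $\check{p}$ --- in effect re-deriving $X_0(L)=l$ inside the proof. Your decomposition is conceptually appealing in that it isolates the discrete translation symmetry of the critical curves, and it makes the proof self-contained; the cost is that you redo a computation the paper already performed when solving \eqref{eq:0513-1} and \eqref{eq:0513-2}, and you must track the sign subtlety between the hat and check cases, which the paper's argument bypasses entirely.
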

\begin{proof}
It suffices to prove the case $(X_n, Y_n)=(\hat{X}_n, \hat{Y}_n)$.
Fix $n\in \N \cup\{0\}$, $m = 0, 1, \ldots, n$, and $s\in[0,L/(n+1)]$ arbitrarily.
First, 
\begin{align*}
\hat{Y}_n \big(s+\tfrac{m}{n+1}L\big)
&= -\frac{ \hat{p} L}{(n+1)\K(\hat{p})}
 \cn\bigg( \frac{2(n+1)\K(\hat{p})}{L} s+ (2m+1)\K(\hat{p}), \hat{p} \bigg) \\
&= -\frac{ \hat{p} L}{(n+1)\K(\hat{p})} (-1)^m
 \cn\bigg( \frac{2(n+1)\K(\hat{p})}{L} s+ \K(\hat{p}), \hat{p} \bigg)  \\
&=(-1)^{m} \frac{1}{n+1} \hat{Y}_0\big( (n+1)s \big),
\end{align*}
where we used the fact that $\cn(\cdot, p)$ is $2K(p)$-antiperiodic. 
Next, the periodicity of $\cn$ and the change of variation $u=(n+1)t$ yield 
\begin{align*}
\int_{\frac{m}{n+1}L}^{s+\frac{m}{n+1}L} & \cn\bigg( \frac{2(n+1)\K(\hat{p})}{L} t +\K(\hat{p}), \hat{p} \bigg)^2\,dt \\
&= \frac{1}{n+1}\int_0^{(n+1)s} \cn\bigg( \frac{2\K(\hat{p})}{L} u +\K(\hat{p}), \hat{p} \bigg)^2\,du .
\end{align*}
On the other hand, using the change of variation $u=(n+1)t$ again, we have
\begin{align*}
\int_0^{\frac{m}{n+1}L} & \cn\bigg( \frac{2(n+1)\K(\hat{p})}{L} t +\K(\hat{p}), \hat{p} \bigg)^2\,dt  \\
&=\frac{1}{n+1} \int_0^{mL} \cn\bigg( \frac{2\K(\hat{p})}{L} u +\K(\hat{p}), \hat{p} \bigg)^2\,du \\
&= \frac{m}{n+1} \int_0^{L} \cn\bigg( \frac{2\K(\hat{p})}{L} u +\K(\hat{p}), \hat{p} \bigg)^2\,du,
\end{align*}
where the periodicity of $\cn^2$ is used. 
Therefore we have
\begin{align*}
 \hat{X}_n\big(s+\tfrac{m}{n+1}L\big)&= 
2\hat{p}^2 \bigg[ \frac{m}{n+1} \int_0^{L} \cn\bigg( \frac{2\K(\hat{p})}{L} u +\K(\hat{p}), \hat{p} \bigg)^2\,du \\
&\quad \quad +\frac{1}{n+1}\int_0^{(n+1)s} \cn\bigg( \frac{2\K(\hat{p})}{L} u +\K(\hat{p}), \hat{p} \bigg)^2\,du \bigg] \\
&\quad \quad+ \big(1-2\hat{p}^2 \big) \big(s+\tfrac{m}{n+1}L\big)\\ 
&= \frac{m}{n+1} \hat{X}_0(L) + \frac{1}{n+1} \hat{X}_0\big( (n+1)s \big) .
\end{align*}
Since $\hat{X}_0(L)=l$, the conclusion follows.
\end{proof}
Next, we check the symmetry of $\hat{\vc}_0$ and $\check{\vc}_0$:
\begin{lemma}\label{lem:5.15}
Let $(X_0, Y_0)$ be either $(\hat{X}_0, \hat{Y}_0)$ or $(\check{X}_0, \check{Y}_0)$.
Then  
\begin{align*}
&X_0(s)= l-X_0(-s+L), \quad Y_0(s)= Y_0(-s+L),
\end{align*}
for $s\in[L/2, L]$.
\end{lemma}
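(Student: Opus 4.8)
The plan is to prove the two symmetry relations directly from the explicit formulas in Theorem~\ref{thm:1.1}, exploiting the symmetry of $\cn$ about the point where its argument equals the half-period. The key observation is that for $n=0$ the argument of $\cn$ appearing in the formulas is the affine map $s\mapsto \frac{2\K(p)}{L}s+\K(p)$ (taking $p=\hat{p}$ or $\check{p}$), which sends $s=0$ to $\K(p)$ and $s=L$ to $3\K(p)$; hence the midpoint $s=L/2$ is sent to $2\K(p)$, and the substitution $s\mapsto -s+L$ reflects the argument about $2\K(p)$. Since $\cn(\cdot,p)$ is $2\K(p)$-antiperiodic and even, it is symmetric about $2\K(p)$ in the sense that $\cn(2\K(p)+\xi,p)=\cn(2\K(p)-\xi,p)$; this is exactly what yields $Y_0(s)=Y_0(-s+L)$, and it works identically for both $\hat{Y}_0$ and $\check{Y}_0$ since only the sign of the overall prefactor and the shift $\pm\K(p)$ differ (and the shift is absorbed into the same reflection).

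I would carry out the $Y_0$ identity first, as it is the cleaner of the two and sets up the computation for $X_0$. Writing $\xi(s):=\frac{2\K(p)}{L}s+\K(p)$ for the $\hat{\vc}_0^{+}$ case, one checks $\xi(-s+L)=4\K(p)-\xi(s)$, and then $\cn(4\K(p)-\xi,p)=\cn(-\xi,p)=\cn(\xi,p)$ using $4\K(p)$-periodicity and evenness of $\cn$; this gives $Y_0(-s+L)=Y_0(s)$ at once. For $X_0$, I would differentiate the proposed identity to reduce it to a statement about $X_0'$, or equivalently observe that $X_0'(s)=\frac{d}{ds}X_0(s)$ equals $2\hat{p}^2\cn(\xi(s),p)^2+(1-2\hat{p}^2)$, which by the $Y_0$-symmetry argument satisfies $X_0'(-s+L)=X_0'(s)$. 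Integrating $X_0'(-s+L)=X_0'(s)$ and matching the boundary value $X_0(L)=l$ (which holds by construction, since $\hat{X}_{\hat{b}_0,\hat{\lambda}_0}(L)=l$ from the solution of \eqref{eq:0513-1}) yields $X_0(s)+X_0(-s+L)=l$, i.e. the claimed relation. The $\check{\vc}_0$ case is handled by the identical reflection, with the overall sign changes in $\check{X}_0$ and $\check{Y}_0$ preserving the structure of the argument.

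The main obstacle I anticipate is purely bookkeeping: one must verify that the reflection $s\mapsto -s+L$ interacts correctly with the $\pm\K(p)$ shift inside the argument of $\cn$ so that the reflected argument lands at a point symmetric about an (anti)period of $\cn$, and that the integral term in $X_0$ transforms as claimed under this reflection. Concretely, for the integral $\int_0^s\cn(\xi(t),p)^2\,dt$ one needs the change of variable $t\mapsto -t+L$ together with the pointwise symmetry of $\cn^2$ about $2\K(p)$; care is needed to track the limits of integration so that the constant of integration produces exactly $l$ and not some other shift. Since all the analytic input (periodicity, antiperiodicity, evenness of $\cn$, and the endpoint condition $X_0(L)=l$) is already available, no new estimates are required, and the proof reduces to a careful but routine verification.
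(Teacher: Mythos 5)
Your proposal is correct and takes essentially the same route as the paper: both rest on the pointwise identity $\cn\big(\tfrac{2\K(p)}{L}(L-s)\pm\K(p),p\big)=\cn\big(\tfrac{2\K(p)}{L}s\pm\K(p),p\big)$, obtained from evenness and $4\K(p)$-periodicity (i.e.\ $2\K(p)$-antiperiodicity applied twice) of $\cn$, which yields the $Y_0$ symmetry immediately. For $X_0$, the paper applies the change of variables $u=-t+L$ to the integral term and invokes $X_0(L)=l$, which is exactly the integrated form of your differentiate-then-match-the-boundary-value argument, so the two proofs coincide in substance.
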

\begin{proof}
Fix $s\in[L/2, L]$ arbitrarily.
It suffices to show $\hat{X}_0(s)= -\hat{X}_0(-s+L) + l$, $\hat{Y}_0(s)= \hat{Y}_0(-s+L)$
since the equations for $\check{X}_0$ and $\check{Y}_0$ can be deduced by the same argument.
Since $\cn$ is the even and $2\K(p)$-antiperiodic function, it follows that
\begin{align} \label{eq:0515-1}
\begin{split}
 \cn\Big( \frac{2\K(\hat{p})}{L} (L-s)+\K(\hat{p}), \hat{p} \Big) 
 &=  \cn\Big( -\frac{2\K(\hat{p})}{L} s + 3\K(\hat{p}), \hat{p} \Big) \\
 & =  \cn\Big( \frac{2\K(\hat{p})}{L} s - 3\K(\hat{p}), \hat{p} \Big) \\
 & =  \cn\Big( \frac{2\K(\hat{p})}{L} s + \K(\hat{p}), \hat{p} \Big).
 \end{split}
\end{align}
Therefore $\hat{Y}_0(s)= \hat{Y}_0(-s+L)$ follows from \eqref{eq:0515-1}.
Moreover, \eqref{eq:0515-1} and the change the variable $u=-t+L$ yield
\begin{align*}
\int_L^{-s+L} \cn\bigg( \frac{2\K(\hat{p})}{L} t +\K(\hat{p}), \hat{p} \bigg)^2\,dt 
= -\int_0^{s} \cn\bigg( \frac{2\K(\hat{p})}{L} u +\K(\hat{p}), \hat{p} \bigg)^2\,du,
\end{align*}
from which we obtain 
\begin{align*}
 \hat{X}_0(-s+L)&= 
2 \hat{p}^2 \int_0^{-s+L} \cn\bigg( \frac{2\K(\hat{p})}{L} t +\K(\hat{p}), \hat{p} \bigg)^2\,dt + \big(1-2\hat{p}^2 \big) (-s+L) \\
&=\hat{X}_0(L)- 2 \hat{p}^2 \int_0^{s} \cn\bigg( \frac{2\K(\hat{p})}{L} u +\K(\hat{p}), \hat{p} \bigg)^2\,du -\big(1-2\hat{p}^2 \big) s \\
&= l -  \hat{X}_0(s).
\end{align*}
This completes the proof.
\end{proof}
From now on we focus on the analysis of $\hat{\vc}^+_0$ and $\hat{\vc}^-_0$.}
It follows from \eqref{eq:A.1} that 
\begin{align*}
\frac{d}{ds} \add{\hat{X}_0}(s)&= 
2 \hat{p}^2 \cn\bigg( \frac{2\K(\hat{p})}{L} s +\K(\hat{p}), \hat{p} \bigg)^2 + \big(1-2\hat{p}^2 \big),   \\
\frac{d}{ds} \add{\hat{Y}_0}(s)&= 2\hat{p}\,
 \sn\Big( \frac{2\K(\hat{p})}{L} s+\K(\hat{p}), \hat{p} \Big) \dn\Big( \frac{2\K(\hat{p})}{L} s+\K(\hat{p}), \hat{p} \Big).
\end{align*}
Then we notice that $\add{\hat{X}'_0}(s)$ is \add{monotonically} \Erase{increase}\Add{increasing} in \add{$(0, L/2]$} and satisfies 
\begin{align*}
\add{\hat{X}'_0}(0)\geq0 \quad \text{if} \quad \hat{p} \leq \frac{1}{\sqrt{2}}, \quad
\add{\hat{X}'_0}(0)<0 \quad \text{if} \quad \hat{p}>\frac{1}{\sqrt{2}}.
\end{align*}
Recalling that $\hat{p}$ is defined by \eqref{eq:3.22}, we obtain the following: 
\begin{align*} 
\begin{cases}
\add{\hat{X}'_0}(0)>0 \quad \text{if} \quad R_*<\dfrac{l}{L} < 1, \\
\add{\hat{X}'_0}(0)=0 \quad \text{if} \quad \dfrac{l}{L} = R_*, \\
\add{\hat{X}'_0}(0)<0 \quad \text{if} \quad 0< \dfrac{l}{L} < R_*,
\end{cases}
\end{align*}
where $R_*$ is given by
\begin{align}\label{eq:3.24}
 R_*  = 2\cdot\frac{\E(1/\sqrt{2})}{\K(1/\sqrt{2})} -1 = 0.456946581\ldots.
\end{align}
Moreover, since $\add{\hat{Y}'_0}(s)\geq0$ holds for \add{$s\in[0, L/2]$} (the equality holds if and only if $s=L/2$), 
the curve $\hat{\vc}_n$ can be represented as the graph of a function if \Add{and only if} $l/L > R_*$\Erase{, while $\hat{\vc}_n$  cannot be represented as the graph of any function otherwise} (see Figure~\ref{fig:1}).
\begin{figure}[http]
\centering
\includegraphics[width=12cm]{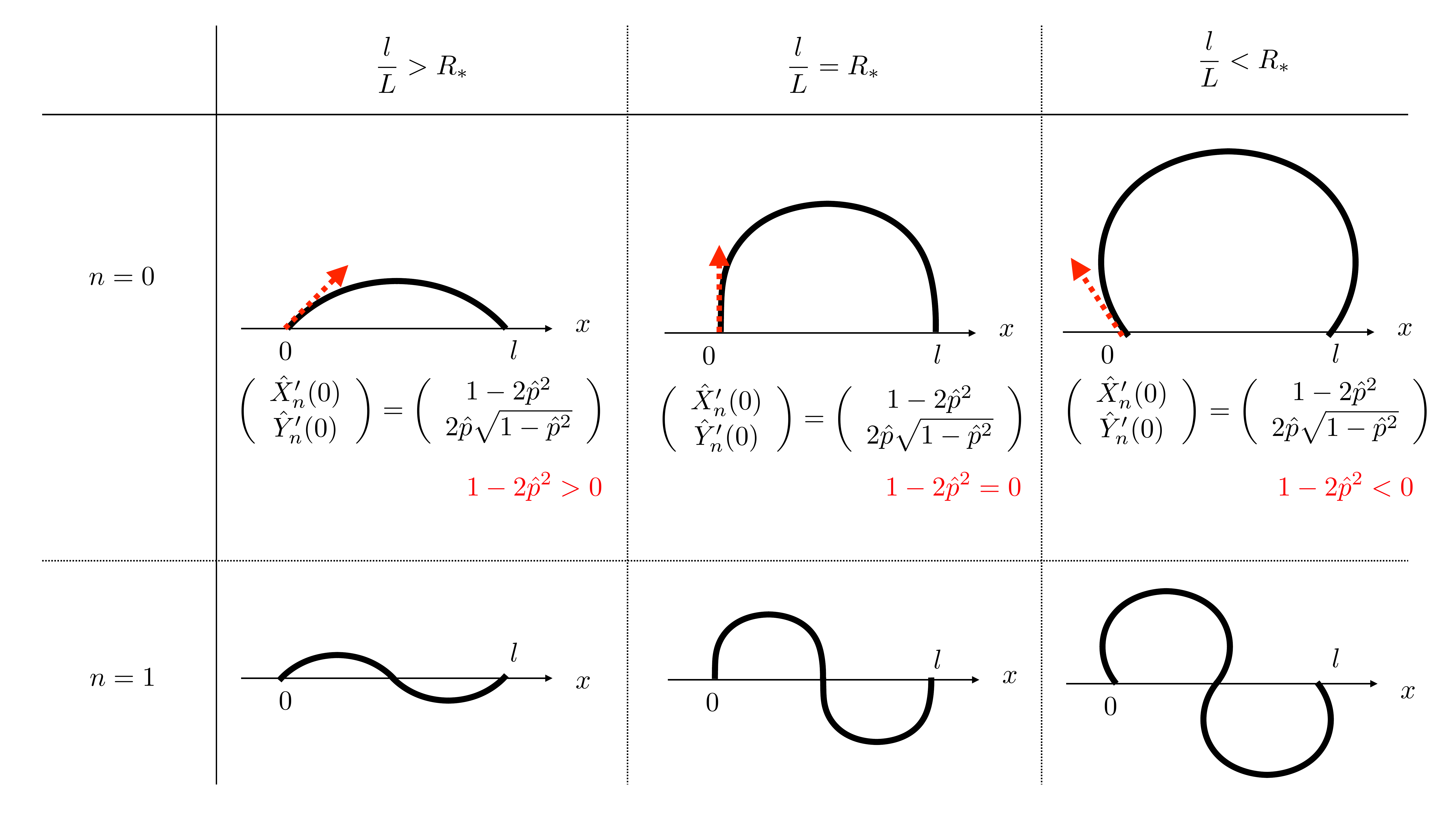} 
\caption{The relation between critical curves and the ratio $l/L$.
The number of inflection points (where the sign of the curvature changes) in $(0,l)$ is given by $n\in\N\cup\{0\}$.
\add{The curve $\hat{\vc}^{\pm}_n$ ($n\in\N$)} \add{can be constructed from $\hat{\vc}^{\pm}_0$.}
}
\label{fig:1}
\end{figure}

\begin{remark}
\add{(1) The curve $\hat{\vc}^+_0=(\hat{X}_0,\hat{Y}_0)$, corresponding to $b<0$, lies in the upper half plane ($\hat{\vc}^-_0=(\hat{X}_0,-\hat{Y}_0)$, corresponding to $b>0$ lies in the lower half plane) so that we choosed $b<0$ in subsection~\ref{subsect:3.1} .
\\
(2)} As explained in \cite[Fig.\,14]{Miura20}, $\hat{\lm}_n$ plays the role of ``tension", that is, critical curves cannot be represented as the graph of any function when $\hat{\lm}_n>0$ holds. 
The values of $\hat{\lm}_n$ are given by \eqref{eq:1005-1}, which implies that the tension value is determined only by the ratio $l/L$.
\end{remark}

\add{
We turn to the analysis of $\check{\vc}^+_0$ and $\check{\vc}^-_0$.}
Define $p_0$ by the constant given by
\[
2\cdot\frac{\E(p_0)}{\K(p_0)} -1 =0.
\]
By \eqref{eq:3.22} and \eqref{eq:3.222}, it holds that
\begin{align} \label{eq:3.30}
 0<\hat{p}<p_0<\check{p}<1 \quad \text{for any ratio}\ \  \frac{l}{L}.
\end{align}
Combining \eqref{eq:3.24} with the monotonicity of $\E(p)/\K(p)$, we notice that $p_0>1/\sqrt{2}$
(actually, numerical simulations show that $p_0=0.90890\ldots$. 
See Figure~\ref{fig:3} \Erase{to know}\Add{for} the relation between $p$ and $l/L$).
Hence 
\begin{align} \label{eq:0516-1}
\check{p}>\frac{1}{\sqrt{2}}
\end{align} 
holds.
\add{
\begin{lemma}\label{lem:5.17}
For each $n\in\N\cup\{0\}$ and $m=0, \ldots, n$, $\check{\vc}^+_n(s)$ and $\check{\vc}^-_n(s)$ have a loop in $ [\frac{m}{n+1}L, \frac{m+1}{n+1}L]$.
\end{lemma}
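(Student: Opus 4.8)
The plan is to reduce the entire statement to producing a single self-intersection of the basic curve $\check{\vc}_0^+$, and then to transport this loop into each block using the similarity structure already recorded in Lemma~\ref{lem:5.16}. Concretely, applying Lemma~\ref{lem:5.16} with $(X_n,Y_n)=(\check X_n,\check Y_n)$ and writing $s=\sigma+\tfrac{m}{n+1}L$ with $\sigma\in[0,L/(n+1)]$, the restriction of $\check{\vc}_n^+$ to $[\tfrac{m}{n+1}L,\tfrac{m+1}{n+1}L]$ equals
\[
\Big(\tfrac{1}{n+1}\check X_0\big((n+1)\sigma\big)+\tfrac{m}{n+1}l,\ (-1)^m\tfrac{1}{n+1}\check Y_0\big((n+1)\sigma\big)\Big),
\]
and as $\sigma$ runs over $[0,L/(n+1)]$ the argument $(n+1)\sigma$ runs over $[0,L]$. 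Thus this arc is the image of the full curve $\check{\vc}_0^+$ under a similarity (a dilation by $1/(n+1)$, a horizontal translation, and a reflection across the $x$-axis when $m$ is odd). Since similarities are injective, they carry self-intersections to self-intersections, and $\check{\vc}_n^-$ is merely the reflection of $\check{\vc}_n^+$ across the $x$-axis; hence it suffices to exhibit one loop of $\check{\vc}_0^+$ on $[0,L]$, which then sits inside each block as required.

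Next I would study the monotonicity of $\check X_0$. Differentiating with \eqref{eq:A.1} gives
\[
\check X_0'(s)=-2\check p^2\,\cn\!\Big(\tfrac{2\K(\check p)}{L}s-\K(\check p),\check p\Big)^2+(2\check p^2-1),
\]
and as $s$ increases from $0$ to $L/2$ the argument increases from $-\K(\check p)$ to $0$, along which $\cn(\cdot,\check p)^2$ increases from $0$ to $1$; so $\check X_0'$ decreases from $2\check p^2-1$ to $-1$. Here \eqref{eq:0516-1} is essential: since $\check p>1/\sqrt2$ we have $2\check p^2-1>0$, so $\check X_0'$ vanishes exactly once, at some $s_*\in(0,L/2)$, being positive on $[0,s_*)$ and negative on $(s_*,L/2]$. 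Consequently $\check X_0$ strictly increases on $[0,s_*]$ and strictly decreases on $[s_*,L/2]$.

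To locate the crossing I would invoke the symmetry of Lemma~\ref{lem:5.15}, namely $\check X_0(L-s)=l-\check X_0(s)$ and $\check Y_0(L-s)=\check Y_0(s)$; taking $s=L/2$ gives $\check X_0(L/2)=l/2$. Because $\check X_0$ strictly decreases on $(s_*,L/2)$, this forces $\check X_0(s_*)>l/2$, whereas $\check X_0(0)=0<l/2$, so the intermediate value theorem yields $s_1\in(0,s_*)\subset(0,L/2)$ with $\check X_0(s_1)=l/2$. For this $s_1$ the symmetric parameter $L-s_1$ satisfies $\check X_0(L-s_1)=l-\tfrac l2=\check X_0(s_1)$ and $\check Y_0(L-s_1)=\check Y_0(s_1)$, so $\check{\vc}_0^+(s_1)=\check{\vc}_0^+(L-s_1)$ with $s_1\neq L-s_1$; moreover $\check Y_0(s_1)>0$, since $\cn$ vanishes on $[0,L]$ only at the endpoints. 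The arc of $\check{\vc}_0^+$ between $s_1$ and $L-s_1$ is therefore a genuine loop.

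The point requiring the most care is showing that the coincidence produced by the symmetry is a true self-crossing rather than a degenerate one: this is exactly where $s_1\neq L-s_1$ (guaranteed by $s_1<L/2$) and $\check Y_0(s_1)>0$ are needed. The only other subtle input is the sign $2\check p^2-1>0$, which is precisely the feature distinguishing $\check{\vc}_n$ from $\hat{\vc}_n$ (for $\hat p$ the analogous argument breaks down and no loop appears), and this is supplied by \eqref{eq:0516-1}. Everything else is a routine combination of the intermediate value theorem with Lemmas~\ref{lem:5.16} and~\ref{lem:5.15}.
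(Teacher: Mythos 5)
Your proof is correct and follows essentially the same route as the paper's: reduce to exhibiting a single loop of $\check{\vc}^+_0$ via Lemma~\ref{lem:5.16}, use $\check p>1/\sqrt{2}$ together with the monotone decrease of $\check X_0'$ on $[0,L/2]$ to find, by the intermediate value theorem, a point $s_1\in(0,L/2)$ with $\check X_0(s_1)=l/2$, and then invoke the symmetry of Lemma~\ref{lem:5.15} to obtain the self-intersection $\check{\vc}^+_0(s_1)=\check{\vc}^+_0(L-s_1)$. The only differences are cosmetic: you spell out the similarity argument behind the reduction to the $n=0$ case and explicitly check $\check Y_0(s_1)>0$, details the paper leaves implicit.
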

\begin{proof}
By Lemma~\ref{lem:5.16}, it suffices to show that $\check{\vc}^+_0(s)$ has a loop in $[0,L]$.
Since 
\begin{align*}
\frac{d}{ds} \check{X}_0(s)&= 
-2 \check{p}^2 \cn\bigg( \frac{2\K(\check{p})}{L} s -\K(\check{p}), \check{p} \bigg)^2 - \big(1-2\check{p}^2 \big),
\end{align*}
it follows from \eqref{eq:0516-1} that $\check{X}_0'(0) = 2\check{p}^2-1>0$ and $\check{X}_0' ({L}/{2})=-1<0$.
Moreover, $\cn \big( \frac{2\K(\check{p})}{L} s -\K(\check{p}), \check{p} \big)$ is monotonically increasing 
in $[0,L/2]$ and hence
$ \check{X}_0'(s)$ is monotonically decreasing in $[0,L/2]$.
Together this with $\check{X}_0(0)=0$ and $ \check{X}_0(L/2)=l/2$ implies that
there is $s_* \in (0,L/2)$ satisfying 
\[ \check{X}_0(s_*) = \frac{l}{2}. \]
By Lemma~\ref{lem:5.15} we have 
\[
\check{\vc}^+_0(s_*) = \check{\vc}^+_0(L-s_*).\]
Moreover, it follows that $\check{Y}_0(s)$ is monotonically increasing in $[0,L/2)$ and hence 
we find that $\check{\vc}^+_0(s)$ has a loop in $[0,L]$.
\end{proof}
}

\begin{remark}
\add{By Lemmas~\ref{lem:5.16} and \ref{lem:5.17}, we notice that the curve $\check{\vc}^+_n$ can be represented as in Figure~\ref{fig:2}.} 
\end{remark}

\begin{figure}[http]
\centering
\includegraphics[width=10.5cm]{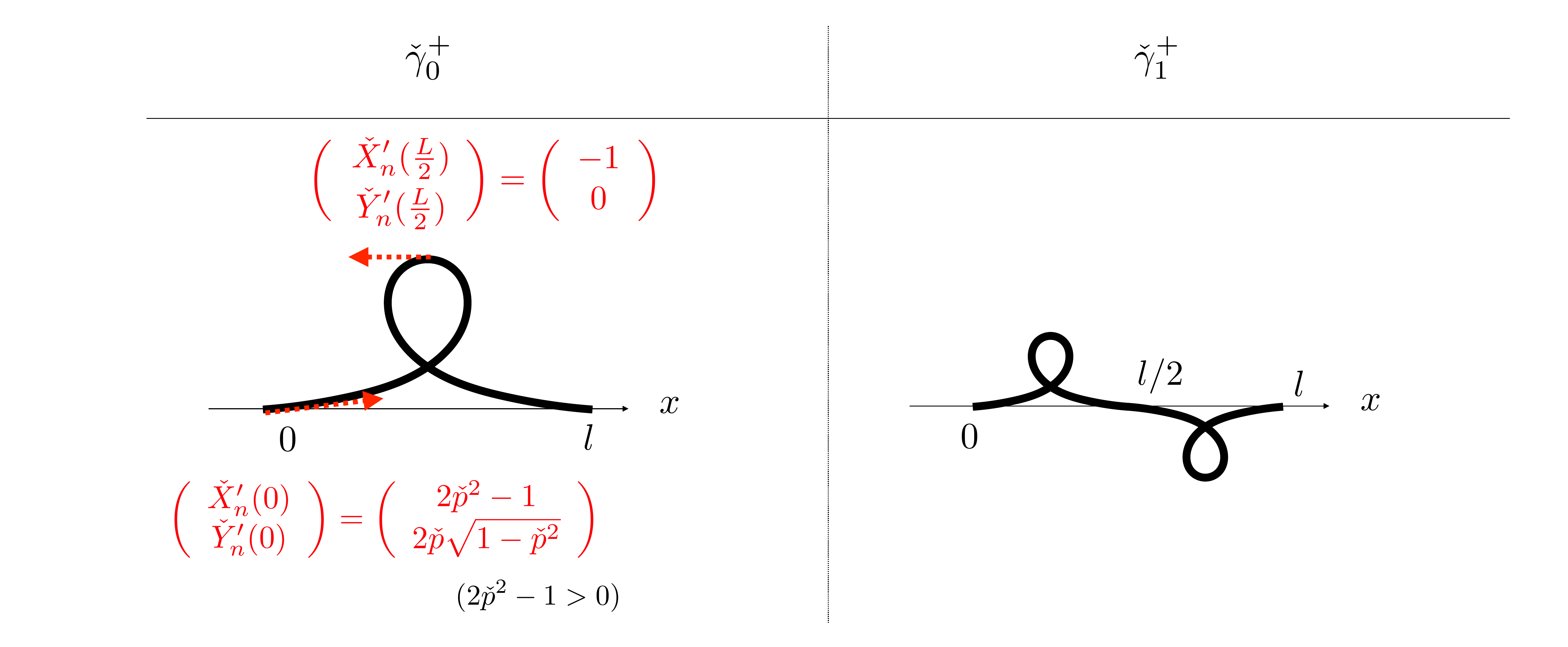} 
\caption{\add{For any $n\in\N\cup\{0\}$ the curves $\check{\vc}^+_n$ and $\check{\vc}^-_n$ has a loop.}}
\label{fig:2}
\end{figure}

\subsection{Comparison among critical curves} \label{subsect:3.3} 
In this subsection we turn to the proof of Theorem~\ref{thm:1.2}
and consider the case that $l/L$ is sufficiently close to $1$.
\add{Since $\W(\hat{\vc}^{+}_n)=\W(\hat{\vc}^-_n)$ and $\W(\check{\vc}^{+}_n)=\W(\check{\vc}^-_n)$ for $n\in\N\cup\{0\}$, hereafter we respectively use $\W(\hat{\vc}_n)$ and $\W(\check{\vc}_n)$ instead of $\W(\hat{\vc}^+_n)$ and $\W(\check{\vc}^+_n)$.}

\begin{proof}[Proof of Theorem~\ref{thm:1.2}]
Fix $0<l<L$ arbitrarily.
By \eqref{eq:3.20} each elastic  energy for $\hat{\vc}_n$ is 
\begin{align} \label{eq:1005-2}
\W(\hat{\vc}_n) =  \frac{16(n+1)^2  \K(\hat{p})}{L}  \Big( \hat{p}^2 \K(\hat{p})-\K(\hat{p}) + \E(\hat{p})  \Big)
\end{align}
and the elastic  energy for $\check{\vc}_n$ is
\begin{align}\label{c-energy}
\W(\check{\vc}_n) =  \frac{16(n+1)^2  \K(\check{p})}{L}  \Big( \check{p}^2 \K(\check{p})-\K(\check{p}) + \E(\check{p})  \Big), 
\end{align}
according to \eqref{eq:3.25}. 
Then we \Erase{soon}\Add{immediately} obtain 
\[
\W(\hat{\vc}_n) = (n+1)^2\W(\hat{\vc}_0), \quad \W(\check{\vc}_n) = (n+1)^2 \W(\check{\vc}_0).
\]
Moreover, \add{since $(p^2\K(p)-\K(p)+\E(p))|_{p=0}$=0,} \Erase{since}it follows from Proposition~\ref{prop:A.2} and Lemma~\ref{lem:2.3} that 
\[ 
p \mapsto \K(p) 
\Big( p^2 \K(p)-\K(p) + \E(p)  \Big) 
\text{ is a \add{monotonically}\Erase{strictly} increasing function on } (0,1).
\]
Recalling \eqref{eq:3.30}, the relation between $\hat{p}$ and $\check{p}$, we \Erase{soon}\Add{then} obtain 
\[ \W(\hat{\vc}_n) < \W(\check{\vc}_n) \quad \text{for each}\quad n\in\N. \]
\Erase{We complete}\Add{This completes} the proof.
\end{proof}

\begin{figure}[http]
\centering
\includegraphics[width=10cm]{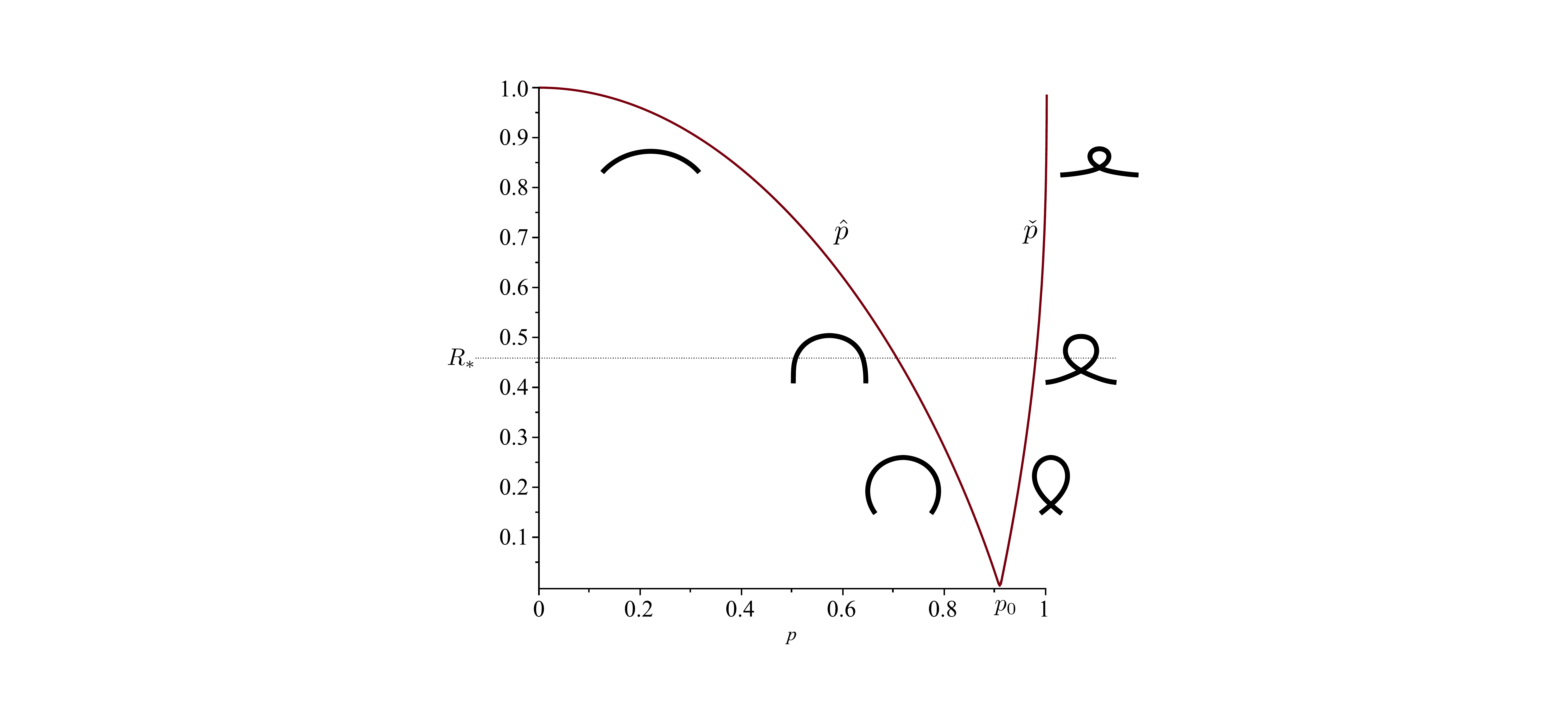} 
\caption{The red curve is the graph of 
$2\frac{\E(p)}{\K(p)}-1$;  $0\leq p \leq p_0$ \ and \ $-2\frac{\E(p)}{\K(p)}+1$;  $p_0< p <1$.
}
\label{fig:3}
\end{figure}

\Add{We are ready to determine which curve is the global minimizer:
\begin{proof}[Proof of Theorem~\ref{thm:1.3}]
The existence of global minimizers of $\W$ in $\pinA$ follows from the direct method in the calculus of variations (see Appendix A).
Since minimizers are also critical points of $\W$ in $\pinA$, in order to identify minimizers it suffices to find critical points whose energy is least.
By Theorem~\ref{thm:1.2}, $\hat{\vc}^{+}_0=(\hat{X}_0, \hat{Y}_0)$ attains the minimum of $\W$ among critical points and so does $\hat{\vc}^{-}_0=(\hat{X}_0, -\hat{Y}_0)$.
Moreover, it has been shown that the other critical points do not attain the minimum. 
This implies that minimizers are nothing but $\hat{\vc}^{\pm}_0$.
\end{proof}
}

By \add{Lemma~\ref{lem:2.1}}, \eqref{eq:3.22}, and \eqref{eq:1005-2}, the \add{larger}\Erase{smaller} the ratio $l/L$ is, the less the elastic energy of $\hat{\vc}^{\pm}_n$ is. 
Conversely, it follows from \add{Lemma~\ref{lem:2.1}}, \eqref{eq:3.222}, \eqref{c-energy} that the \add{smaller}\Erase{larger} the ratio $l/L$ is, the less the elastic  energy of $\hat{\vc}^{\pm}_n$ is.
\Add{In order to investigate $\W(\check{\vc}_0)$ when $l/L$ is close to $1$, we will require the following lemma}:
\begin{lemma}\label{prop:3.5}
Let $\K(p)$ and $\E(p)$ be the complete elliptic integral. 
Then it holds that
\begin{align*}
 \K(p)  \Big( p^2 \K(p)-\K(p) + \E(p)  \Big) \to \infty \quad\text{as}\quad p\uparrow1.
\end{align*}
\end{lemma}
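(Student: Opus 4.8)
The plan is to analyze the asymptotic behavior of the two factors as $p \uparrow 1$ separately and show their product diverges. First I would recall from Lemma~\ref{lem:2.1} and Proposition~\ref{prop:A.2} the limiting behavior of the complete elliptic integrals: as $p \uparrow 1$ we have $\K(p) \to \infty$ while $\E(p) \to \E(1) = 1$. Thus the first factor $\K(p)$ already blows up, and the whole question reduces to understanding the sign and decay rate of the second factor $p^2\K(p) - \K(p) + \E(p) = -(1-p^2)\K(p) + \E(p)$.

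The key step is therefore to control $-(1-p^2)\K(p) + \E(p)$ as $p \uparrow 1$. Here the subtlety is that $(1-p^2)\K(p)$ is an indeterminate form $0 \cdot \infty$; I would show that this product tends to $0$ as $p\uparrow1$, so that the second factor converges to $\E(1) = 1 > 0$. The cleanest way to see $(1-p^2)\K(p) \to 0$ is to use the standard logarithmic asymptotics of $\K$ near the singularity, namely $\K(p) \sim \log\!\big(4/\sqrt{1-p^2}\big)$ as $p \uparrow 1$, which is dominated by any positive power of $(1-p^2)^{-1}$; hence $(1-p^2)\K(p) \to 0$. If one prefers to stay within the tools already established in the paper, I would instead integrate the derivative formula from Lemma~\ref{lem:2.3}, namely $\frac{d}{dp}\big(p^2\K(p)-\K(p)+\E(p)\big) = p\K(p) > 0$, which shows $p \mapsto p^2\K(p)-\K(p)+\E(p)$ is monotonically increasing on $(0,1)$; combined with its value $0$ at $p=0$ this gives positivity, and then a direct estimate identifies the limit as a finite positive number.

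Once it is established that $p^2\K(p)-\K(p)+\E(p) \to c$ for some constant $c > 0$ (in fact $c = \E(1) = 1$), the conclusion is immediate: the product
\[
\K(p)\Big(p^2\K(p)-\K(p)+\E(p)\Big) \to \infty \cdot c = \infty
\]
as $p \uparrow 1$, since the first factor diverges and the second is bounded below by a positive constant near $p=1$.

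I expect the main obstacle to be the rigorous justification that $(1-p^2)\K(p) \to 0$, i.e.\ resolving the $0 \cdot \infty$ indeterminacy. The monotonicity argument via Lemma~\ref{lem:2.3} secures positivity of the second factor cleanly, but pinning down that its limit is a strictly positive constant (rather than, say, decaying to $0$ at a rate that could compete with the divergence of $\K$) requires the logarithmic growth estimate for $\K$ near $p=1$. This is where I would invoke the classical asymptotic expansion of $\K$ (see e.g.\ \cite{Byrd}); everything else is routine once that bound is in hand.
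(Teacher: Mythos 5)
Your argument is correct, but it takes a genuinely different route from the paper's proof. You split the product into its two factors and reduce everything to showing that the second factor $-(1-p^2)\K(p)+\E(p)$ tends to $\E(1)=1>0$, invoking the classical logarithmic asymptotics of $\K$ near $p=1$ to resolve the indeterminate term $(1-p^2)\K(p)$. The paper never separates the factors: it expands the product as $\E(p)\K(p)-(1-p^2)\K(p)^2$ and uses the elementary estimate
\[
\sqrt{1-p^2}\,\K(p)=\int_0^1\frac{1}{\sqrt{1-z^2}}\,\frac{\sqrt{1-p^2}}{\sqrt{1-p^2z^2}}\,dz
\le\int_0^1\frac{dz}{\sqrt{1-z^2}}=\frac{\pi}{2},
\]
so that the subtracted term is uniformly bounded by $\pi^2/4$ while $\E(p)\K(p)\to\infty$; no asymptotic expansion of $\K$ is needed. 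Two remarks on your version. First, the obstacle you single out (the $0\cdot\infty$ indeterminacy) dissolves under the same two-line bound: $(1-p^2)\K(p)\le\tfrac{\pi}{2}\sqrt{1-p^2}\to0$, so the expansion $\K(p)\sim\log\big(4/\sqrt{1-p^2}\big)$ is heavier machinery than necessary. Second, your fallback via Lemma~\ref{lem:2.3} already suffices on its own, and the worry you attach to it is unfounded: since $g(p):=p^2\K(p)-\K(p)+\E(p)$ is strictly increasing on $(0,1)$ with $g(0)=0$, it satisfies $g(p)\ge g(p_1)>0$ for all $p\ge p_1$, hence cannot decay to $0$, and then $\K(p)g(p)\ge g(p_1)\K(p)\to\infty$ with no asymptotics at all. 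What your primary route buys is the precise growth rate (the product diverges like $\E(1)\K(p)$, i.e.\ logarithmically in $(1-p)^{-1}$); what the paper's route buys is a proof that stays entirely within facts already established in the text.
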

\begin{proof}
To begin with, we notice that for any $p\in(0,1)$ 
\[ 
\sqrt{1-p^2}\, \K(p) = \int_0^1 \frac{1}{\sqrt{1-z^2}} \frac{\sqrt{1-p^2}}{\sqrt{1-p^2z^2}}dz
\leq  \int_0^1 \frac{1}{\sqrt{1-z^2}} dz = \frac{\pi}{2}.
\]
Hence it follows that
\[ \liminf_{p \uparrow 1} (1-p^2) \K(p)^2 \leq \frac{\pi^2}{4}.
\]
This together with $\E(1)=1$ and $\lim_{p\uparrow1}\K(p)=\infty$ yields
\begin{align*}
 \liminf_{p \uparrow 1} \Big( \K(p)  \big( p^2 \K(p)-\K(p) + \E(p) \big)  \Big) 
 \geq - \frac{\pi^2}{4} + \liminf_{p \uparrow 1}\Big( \E(p) \K(p) \Big) = \infty,
\end{align*}
\Erase{We complete}\Add{which completes} the proof.
\end{proof}

Since $l/L\to1$ implies that $\check{p}\to1$, combining Lemma~\ref{prop:3.5} with \eqref{c-energy}, we observe that
\[ \W(\check{\vc}_0) \to \infty \quad\text{as}\quad l/L\to 1. \]

\begin{remark}\label{rem:3.5}
\add{(1)} 
Fix $L>0$ arbitrarily.
It follows from the previous argument that
\[ \bullet \ \  \text{when $l$ is sufficiently close to $L$, $\W(\Add{\check{\vc}^+_0} )>\W(\Add{\hat{\vc}_1})$ holds}. \]
On the other hand,
\[ \bullet \ \ \text{when $l$ is sufficiently small, $\W(\Add{\check{\vc}_0} )<\W(\Add{\hat{\vc}_1})$ holds}.\]
In fact, since both $\hat{p}$ and $\check{p}$ tend to $p_0$ as $l\downarrow 0$, 
 $\W(\hat{\vc}_n)$ is almost equal to $\W(\check{\vc}_n)$ for each $n\in\N$.
This observation implies that
\Add{\textit{$l/L$ determines which of the curves $\hat{\vc}_1$ and $\check{\vc}_0$ has the second smallest elastic energy.}
}\\
(2) \add{In the case $l=0$, the minimization problem is considered by \cite{ANP_X} and the minimizes are uniquely determined as the half-fold figure-eight up to the reflection (see \cite{MiuraX21, MR_X}).
We expect that $\hat{\vc}_0$ and $\check{\vc}_0$ tend to the half-fold figure-eight letting $l\to0$ formally.
}
\end{remark}





\appendix

\section{Existence of minimizers}

Here we shall prove the existence and the smoothness of minimizers of $\W$ in $\pinA$ via a direct method in the calculus of variations.
Hereafter $\W'(\vc)(\vp)$ and $\LL'(\vc)(\vp)$ denote the first variation of $\W$ and $\LL$ at $\vc$ in direction $\vp$, respectively.
That is, 
\[ 
\W'(\vc)(\vp)= \frac{d}{d\ve}\W(\vc+\ve\vp)\Big|_{\ve=0} , \quad \LL'(\vc)(\vp)= \frac{d}{d\ve}\LL(\vc+\ve\vp)\Big|_{\ve=0}.
\]
For $\vc$ parameterized by the arc length, it holds that 
\begin{align}
\W'(\vc)(\vp)&= \int_0^L\bigg( 2\pd_s^2\vc\cdot \pd_s^2\vp -3(\pd_s\vc\cdot \pd_s \vp)|\pd_s^2\vc|^2 \bigg)ds, \label{eq:a.11} \\
\LL'(\vc)(\vp)&=  \int_0^L \pd_s \vc \cdot \pd_s \vp \,ds, \notag
\end{align} 
for $ \vp:[0,L]\to\R^2$,
where $s$ is the arc length parameter of $\vc$ (for \eqref{eq:a.11}, see e.g. \cite{DMN}). 

To begin with, we shall show that the regularity of critical points is improved:
\begin{theorem} \label{thm:a.2} 
All critical points of $\W$ in 
\begin{align*}
H_{l,L} := \Set{ \vc \in H^2\big(0,1;\R^2 \big) | 
\begin{array}{l}
\vc(0)=(0,0),  \ 
\vc(1)= (0,l), \\
\LL(\vc)=L, \ \ \ \ \ 
\vc'(t)\ne0 
\end{array}
}
\end{align*}
belong to $C^{\infty}(0,1)$, up to a reparameterization.
\end{theorem}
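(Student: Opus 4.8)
The plan is to establish elliptic regularity for critical points in $H_{l,L}$ by a bootstrap argument, exploiting the weak Euler--Lagrange equation together with the freedom to reparameterize. The key obstacle is that a generic $H^2$ curve $\vc$ has no canonical smooth structure a priori; the arc length parameter $s$ is defined via $\int |\vc'|$, and low regularity of $|\vc'|$ could contaminate the analysis. The remedy, as hinted in Remark~\ref{rem:2.1}, is to first reparameterize $\vc$ by arc length, which is legitimate since $\vc'(t)\ne0$ ensures $\ell(t)=|\vc'(t)|$ is bounded away from zero and $\vc\in H^2$ gives $\ell\in H^1\subset C^0$; thus the arc length parameterization $\tilde{\vc}$ again lies in $H^2(0,L;\R^2)$ with $|\partial_s\tilde{\vc}|\equiv1$.

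First I would write down the weak form of the criticality condition. Since $\vc$ is critical for $\W$ subject to the length constraint, the Lagrange multiplier rule gives some $\lm\in\R$ with
\begin{align}\label{eq:a.weak}
\W'(\tilde{\vc})(\vp) + \lm\,\LL'(\tilde{\vc})(\vp) = 0
\end{align}
for all admissible $\vp\in H^2_0\cap H^1_0$, where by \eqref{eq:a.11} the principal term is $\int_0^L 2\,\partial_s^2\tilde{\vc}\cdot\partial_s^2\vp\,ds$. The point is that \eqref{eq:a.weak} exhibits $\partial_s^2\tilde{\vc}$ as the (distributional) solution of a fourth-order equation whose inhomogeneous terms are algebraic expressions in $\partial_s\tilde{\vc}$ and $\partial_s^2\tilde{\vc}$ — namely the curvature term $-3(\partial_s\tilde{\vc}\cdot\partial_s\vp)|\partial_s^2\tilde{\vc}|^2$ and the multiplier term. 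The arc length normalization $|\partial_s\tilde{\vc}|\equiv1$ is what makes these lower-order terms well controlled.

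Next I would run the bootstrap. Starting from $\tilde{\vc}\in H^2$, so that $\partial_s\tilde{\vc}\in H^1\subset C^0$ and $\partial_s^2\tilde{\vc}\in L^2$, the right-hand side of the fourth-order equation for $\tilde{\vc}$ lies in a space one can integrate against test functions to gain two derivatives: testing \eqref{eq:a.weak} shows $\partial_s^4\tilde{\vc}$ equals (distributionally) a combination of $|\partial_s^2\tilde{\vc}|^2\partial_s\tilde{\vc}$ and $\partial_s^2\tilde{\vc}$ and $\lm$-terms, which by the current regularity lies in at least $L^1_{\mathrm{loc}}$; a more careful reading using $\partial_s^2\tilde{\vc}\in L^2$ and the boundedness of $\partial_s\tilde{\vc}$ puts the forcing in $H^{-1}_{\mathrm{loc}}$, whence $\tilde{\vc}\in H^3_{\mathrm{loc}}$. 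Then $\partial_s^2\tilde{\vc}\in H^1\subset C^0$, so the forcing improves to $L^2_{\mathrm{loc}}$, giving $\tilde{\vc}\in H^4_{\mathrm{loc}}$, and the standard inductive step $\tilde{\vc}\in H^k\Rightarrow\tilde{\vc}\in H^{k+1}$ closes because the nonlinearity is polynomial in $\partial_s\tilde{\vc},\partial_s^2\tilde{\vc}$ and these spaces are algebras once the index exceeds $1/2$. Iterating yields $\tilde{\vc}\in H^k_{\mathrm{loc}}$ for all $k$, hence $\tilde{\vc}\in C^\infty(0,L)$ by Sobolev embedding.

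The main obstacle I anticipate is the rigorous derivation of the strong (fourth-order) equation from \eqref{eq:a.weak} at the lowest regularity, where $\partial_s^2\tilde{\vc}$ is merely $L^2$: one must justify the integrations by parts defining $\W'$ in the weak sense and verify that the curvature nonlinearity $|\partial_s^2\tilde{\vc}|^2$ is genuinely in $L^1$ so that the distributional identity for $\partial_s^4\tilde{\vc}$ is meaningful before any regularity has been gained. Once the first bootstrap step $H^2\to H^3$ is secured, the remaining iterations are routine. I would also note that the claimed conclusion is smoothness on the open interval $(0,1)$ (equivalently $(0,L)$); interior regularity suffices and the endpoints, where the constraint and boundary conditions live, are excluded, so no boundary-regularity subtleties arise in the statement.
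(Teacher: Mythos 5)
Your overall strategy is the same as the paper's: pass to the arc length parameterization $\tilde{\vc}$, invoke the Lagrange multiplier rule to obtain $\lm\in\R$ with $\W'(\tilde{\vc})(\vp)+\lm\LL'(\tilde{\vc})(\vp)=0$ for all $\vp\in C^{\infty}_{\mathrm{c}}(0,L;\R^2)$ via \eqref{eq:a.11}, and then bootstrap (the paper leaves the bootstrap entirely implicit). However, the one step where you go beyond the paper's level of detail --- the step you yourself identify as the crux, $H^2\to H^3$ --- is wrong as written. Setting $F:=3|\pd_s^2\tilde{\vc}|^2\pd_s\tilde{\vc}+\lm\,\pd_s\tilde{\vc}$, the weak equation reads
\begin{equation*}
\int_0^L 2\,\pd_s^2\tilde{\vc}\cdot\pd_s^2\vp\,ds=\int_0^L F\cdot\pd_s\vp\,ds ,
\end{equation*}
i.e. $2\,\pd_s^4\tilde{\vc}=-\pd_s F$ distributionally: the forcing is the \emph{derivative} of the combination you describe, not the combination itself. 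At base regularity one only knows $F\in L^1(0,L)$ (because $|\pd_s^2\tilde{\vc}|^2\in L^1$ and $|\pd_s\tilde{\vc}|\equiv1$). In one dimension $\pd_s F\in H^{-1}_{\mathrm{loc}}$ would force $F\in L^2_{\mathrm{loc}}$, i.e. $\pd_s^2\tilde{\vc}\in L^4_{\mathrm{loc}}$, which is exactly what has not yet been established; what one can say, $F\in L^1\subset H^{-1}$, only yields $\pd_s F\in H^{-2}_{\mathrm{loc}}$ and hence no gain at all over $\tilde{\vc}\in H^2_{\mathrm{loc}}$. So the conclusion ``whence $\tilde{\vc}\in H^3_{\mathrm{loc}}$'' does not follow from the argument given.

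The gap is bridged by a standard device, which is presumably the ``bootstrap argument'' the paper intends. Let $G(s):=\int_0^s F(t)\,dt$, so $G\in W^{1,1}(0,L)\subset C^0([0,L])$. Since $\int_0^L F\cdot\pd_s\vp\,ds=-\int_0^L G\cdot\pd_s^2\vp\,ds$ for $\vp\in C^{\infty}_{\mathrm{c}}(0,L;\R^2)$, the weak equation becomes $\int_0^L\big(2\,\pd_s^2\tilde{\vc}+G\big)\cdot\pd_s^2\vp\,ds=0$ for all test $\vp$, which forces $2\,\pd_s^2\tilde{\vc}+G$ to be affine in $s$; as $G$ is continuous, $\pd_s^2\tilde{\vc}$ has a continuous representative. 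Then $F$ is continuous, so $G\in C^1$ and $\tilde{\vc}\in C^3$; inductively $\tilde{\vc}\in C^k\Rightarrow F\in C^{k-2}\Rightarrow G\in C^{k-1}\Rightarrow\tilde{\vc}\in C^{k+1}$, giving $\tilde{\vc}\in C^{\infty}$. With this replacement of your first step (after which your $H^k$ algebra-property induction is indeed routine), the rest of your proposal --- the justification that the arc length reparameterization stays in $H^2$ with $|\pd_s\tilde{\vc}|\equiv1$, and the observation that only interior smoothness of a reparameterization is claimed --- matches the paper's proof.
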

\begin{proof}
Let $\vc \in H_{l,L}$ be an arbitrary critical point of $\W$
and $\tilde{\vc}$ be the arc length parameterization of $\vc$. 
Then it clearly holds that $\tilde{\vc}$ is a minimizer of $\W$ in 
\begin{align*}
\tilde{H}_{l,L} := \Set{ \vc \in H^2\big(0,L;\R^2 \big) | 
\begin{array}{l}
\vc(0)=(0,0),  \ 
\vc(L)= (0,l), \\
\LL(\vc)=L, \ \ \ \ \ 
\vc'(t)\ne0 
\end{array}
}.
\end{align*}
Thanks to the Lagrange multiplier method, there exists $\lm\in\R$ such that 
\begin{align*} 
0&=\W'(\tilde{\vc})(\vp)-\lm \LL'(\tilde{\vc})(\vp) \\
&=\int_0^L\bigg( 2\pd_s^2\tilde{\vc}\cdot\vp'' -3(\pd_s\tilde{\vc}\cdot\vp')|\pd_s^2\tilde{\vc}|^2 \bigg)ds -\lm \int_0^L \pd_s\tilde{\vc}\cdot\vp' \,ds
\end{align*}
for any $\vp\in C^{\infty}_{\mathrm{c}}(0,L;\R^2)$.
With the help of a bootstrap argument the smoothness of $\tilde{\vc}$ follows.
Moreover, $\bar{\vc}(t):=\tilde{\vc}(Lt)$ ($t\in[0,1]$), a reparameterization of $\vc$, is also smooth.
\end{proof}


\begin{theorem} \label{thm:a.1} 
The minimization problem
\[
\min_{\vc\in \pinA} \W(\vc)
\]
admits a solution.
\end{theorem}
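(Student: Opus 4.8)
The plan is to run the direct method of the calculus of variations, but in the Hilbert-space setting $\tilde{H}_{l,L}$ of arc-length (unit-speed) $H^2$ curves rather than in $\pinA$ itself, and then to upgrade the resulting $H^2$ minimizer to a smooth element of $\pinA$ via the regularity result Theorem~\ref{thm:a.2}. The starting observation is that for a unit-speed curve one has $\pd_s^2\vc=\vk\,\bn$, so that
\[
\W(\vc)=\int_0^L \vk^2\,ds = \int_0^L |\pd_s^2\vc|^2\,ds,
\]
i.e. the energy is exactly the square of the $H^2$-seminorm. Since both $\W$ and $\LL$ are invariant under reparameterization, arc-length reparameterizing the curves of $\pinA$ embeds them into $\tilde{H}_{l,L}$ without changing the energy, whence $\inf_{\tilde{H}_{l,L}}\W\le\inf_{\pinA}\W$. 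I would therefore first produce a minimizer in $\tilde{H}_{l,L}$.

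For the compactness step I would take a minimizing sequence $\{\vc_k\}\subset\tilde{H}_{l,L}$, each satisfying $|\pd_s\vc_k|\equiv1$, the endpoint conditions, and $\LL(\vc_k)=L$. The energies $\|\pd_s^2\vc_k\|_{L^2}^2=\W(\vc_k)$ are bounded; $\|\pd_s\vc_k\|_{L^2}^2=L$ is fixed; and $\vc_k(0)=(0,0)$ together with $|\pd_s\vc_k|\equiv1$ gives $|\vc_k(s)|\le s\le L$, hence a uniform $L^\infty$, and thus $L^2$, bound. So $\{\vc_k\}$ is bounded in $H^2(0,L;\R^2)$, and after passing to a subsequence $\vc_k\wto\vc_*$ weakly in $H^2$. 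The compact embedding $H^2(0,L)\hookrightarrow\hookrightarrow C^1([0,L])$ then yields $\vc_k\to\vc_*$ in $C^1$.

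The next step is to verify that all constraints survive in the limit and that $\W$ is lower semicontinuous. The endpoint conditions pass by $C^0$-convergence. The unit-speed constraint $|\pd_s\vc_*|\equiv1$ follows from the uniform convergence $|\pd_s\vc_k|\to|\pd_s\vc_*|$; this simultaneously guarantees $\pd_s\vc_*\ne0$, so that $\vc_*$ is an immersion, gives $\LL(\vc_*)=L$, and shows that $s$ is genuinely the arc-length parameter of $\vc_*$. Lower semicontinuity, $\W(\vc_*)=\|\pd_s^2\vc_*\|_{L^2}^2\le\liminf_k\|\pd_s^2\vc_k\|_{L^2}^2=\inf_{\tilde{H}_{l,L}}\W$, is then just weak lower semicontinuity of the $L^2$-norm. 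Hence $\vc_*$ minimizes $\W$ in $\tilde{H}_{l,L}$. Being a minimizer it is in particular a critical point, so Theorem~\ref{thm:a.2} makes it smooth after a reparameterization $\bar{\vc}\in\pinA$ with $\W(\bar{\vc})=\W(\vc_*)=\inf_{\tilde{H}_{l,L}}\W\le\inf_{\pinA}\W$; since $\bar{\vc}\in\pinA$, this forces equality and exhibits $\bar{\vc}$ as the desired minimizer.

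I expect the main obstacle to be the preservation of the immersion (non-degeneracy) constraint $\vc'\ne0$ under weak limits: for arbitrary parameterizations a weak $H^2$-limit may degenerate, collapsing $\pd_s\vc$ to zero somewhere and destroying admissibility. This is precisely why I would normalize the minimizing sequence to unit speed from the outset, so that the rigid condition $|\pd_s\vc_k|\equiv1$ passes to the $C^1$-limit and converts both non-degeneracy and the length constraint into robust closed conditions. The only other delicate point is the bookkeeping that identifies $\inf_{\pinA}\W$ with $\inf_{\tilde{H}_{l,L}}\W$, which the regularity theorem supplies.
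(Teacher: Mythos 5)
Your proposal is correct and takes essentially the same route as the paper: the paper also runs the direct method on speed-normalized $H^2$ curves (constant speed $L$ on $[0,1]$ rather than unit speed on $[0,L]$), extracts a weak-$H^2$/strong-$C^1$ limit so that the endpoint, length, and immersion constraints pass to the limit, uses weak lower semicontinuity of the energy, and then invokes Theorem~\ref{thm:a.2} to upgrade the $H^2$ minimizer to a smooth element of $\pinA$. The one step worth making explicit is that a minimizer in your unit-speed class is in fact a minimizer (hence a critical point) in the full class $H_{l,L}$, which is exactly the reparameterization-invariance observation you state in your first paragraph, so that Theorem~\ref{thm:a.2} indeed applies.
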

\begin{proof}
First, we show that $\W$ has a minimizer in $H_{l,L}$. 
Let $\{\vc_k\}_{k\in\N}\subset H_{l,L}$ be a minimizing sequence for $\W$, that is,  
\begin{equation}\label{eq:a.16}
\lim_{k\to\infty}  \W(\vc_k) = \inf_{\vc\in H_{l,L}}  \W(\vc). 
\end{equation}
Then we can find $C>0$ such that $\W(\vc_k) \leq C$ for $k\in\N$.

\smallskip

\noindent
\textbf{Step 1}. {\sl The constant parameterization.\ }
Let $\tilde{\vc}_k(s)$ be the arc length parameterization of $\vc_k$ ($s=s_k$: the arc length of $\vc_k$) and set 
\[
\bar{\vc}_k(x) :=\tilde{\vc}_k(Lx) \quad\text{for}\quad x\in[0,1]
\]
(The arc length parameterization of $\bar{\vc}_k$ is clearly $\tilde{\vc}_k$).
We notice that $\bar{\vc}_k$ belongs to $H_{l,L}$ for $k\in\N$. 
Recalling that $|\pd_s \tilde{\vc}_k(s)|\equiv1$ for $s\in[0,L]$, we obtain
\[
| \bar{\vc}'_k(x) |\equiv L \quad \text{for}\quad x\in[0,1], 
\]
which yields
\begin{align*}
\W({\vc}_k)&=\int_0^L \vk(s)^2 \,ds 
=  \frac{1}{L^3} \int_0^1 \left| \pd_x^2\bar{\vc}_k \right|^2 \,dx
= \W(\bar{\vc}_k).
\end{align*}

\smallskip

\noindent
\textbf{Step 2}. {\sl Show that there is a minimizer in $H_{l,L}$.}
Since $\W(\vc_k) \leq C$ for $k\in\N$, we infer from the above equation that 
\begin{align}\label{eq:a.07}
\int_0^1 \left| \pd_x^2\bar{\vc}_k \right|^2 \,dx \leq C L^3.
\end{align}
On the other hand, by $\mathcal{L}(\bar{\vc}_k)=L$ and $\bar{\vc}_k=(0,0)$, we have $\| \bar{\vc}_k \|_{L^{\infty}(0,1)} \leq L$ and hence
\begin{align}\label{eq:a.008}
\| \bar{\vc}_k \|_{L^{2}(0,1)} \leq L
\end{align}
follows.
Therefore combining $|\bar{\vc}'_{k}| \equiv L$ with \eqref{eq:a.07} and \eqref{eq:a.008}, we obtain
\[
\| \bar{\vc}_k \|_{H^2} = \sqrt{\|\bar{\vc}_k \|^2_{L^2} + \|\bar{\vc}_k' \|^2_{L^2} +\|\bar{\vc}_k'' \|^2_{L^2}  }
\leq \sqrt{\tfrac{2}{3}L^2+ L^2 + L^3 C }.
\]
Then, by the Sobolev compact embedding, there exists a curve $\vc\in H^2(0,1;\R^2)$ and a subsequence $\{ \bar{\vc}_{k_j}\}_j$ such that
\[ 
\bar{\vc}_{k_j} \to \vc
\]
weakly in $H^2(0,1;\R^2)$ and $C^1([0,1];\R^2)$.
Combining the convergence in $C^1$ with $\bar{\vc}_k\in H_{l,L}$, we obtain 
\[
\vc(0) = (0,0), \quad \vc(1)=(l,0), \quad  \LL(\vc)=L,\quad  |\vc'|\equiv L>0,
\]
which implies $\vc \in H_{l,L}$.
Furthermore, thanks to the weak convergence in $H^2$, we infer from $|\vc'|\equiv L$ that
\[
\W(\vc)=\frac{1}{L^3}\int_0^1 |\vc''|^2\,dx \leq  \liminf_{j\to\infty}\frac{1}{L^3}\int_0^1 |\bar{\vc}_{k_j}''|^2\,dx = \inf_{\vc\in H_{l,L}} \W(\vc),
\]
where in the last equality we used \eqref{eq:a.16}.
Therefore $\vc$ is a minimizer of $\W$ in $H_{l,L}$.

\smallskip

\noindent
\textbf{Step 3}. {\sl Conclusion.}
Set
\[
\bar{\vc}(x):=\tilde{\vc}(Lx) \quad \text{for}\quad x\in[0,1],
\]
where $\tilde{\vc}$ is the arc length parameterization of $\vc$.
By Step 2 and Theorem~\ref{thm:a.2}, we find that $\bar{\vc}\in H_{l,L}$ and $\bar{\vc}\in C^{\infty}([0,1];\R^2)$
and hence it holds that $\bar{\vc}\in \pinA$.
Furthermore, since $\vc$ is a minimizer in $H_{l,L}$, we can conclude that $\bar{\vc}$ is also a minimizer in $\pinA$. 
\end{proof}


\subsection*{Acknowledgements}
The author was supported by \Erase{JSPS KAKENHI Grant Number 19J20749}\Add{Grant-in-Aid for JSPS Fellows 19J20749}. 
The author would like to thank Professor Shinya Okabe for fruitful discussions 
\Add{and Philip Schrader at Tohoku University for several helpful comments.
Moreover, the author expresses his gratitude to the reviewer for his great effort and numerous helpful suggestions leading to an improved version of the manuscript.
}


\end{document}